\documentclass{article}

\usepackage{arxiv}

\usepackage{amsthm}
\usepackage{hyperref}
\usepackage{url}
\usepackage{hyperref}
\usepackage{url}
\usepackage[utf8]{inputenc} 
\usepackage[T1]{fontenc}    
\usepackage{booktabs}       
\usepackage{amsfonts}       
\usepackage{nicefrac}       
\usepackage{microtype}      
\usepackage{mathtools}
\usepackage{amssymb}
\usepackage[shortlabels]{enumitem}
\usepackage{xcolor,colortbl}
\usepackage{comment}
\usepackage{subcaption}
\usepackage{graphicx}
\usepackage{makecell}
\usepackage{multirow}
\usepackage{float}
\usepackage{booktabs,caption}
\usepackage[flushleft]{threeparttable}
\usepackage{ulem}
\usepackage{algorithm}
\usepackage{algpseudocode}
\usepackage[T1]{fontenc}
\usepackage{lmodern}

\newcommand{\Var}{\operatorname{Var}}

\newcommand{\req}[1]{Eq.\,(\ref{#1})}

\newtheorem{theorem}{Theorem}[section]
\newtheorem{lemma}[theorem]{Lemma}
\newtheorem{proposition}{Proposition}
\newtheorem{corollary}{Corollary}

\theoremstyle{definition}

\theoremstyle{remark}
\newtheorem{remark}[theorem]{Remark}

\numberwithin{equation}{section}

 \title{Concentration Inequalities and UQ Bounds for  Hypocoercive MCMC Samplers}

\author{Jeremiah Birrell\\Department of Mathematics\\ Texas State University, 601 University Drive, San Marcos, TX, 78666, USA\\
\texttt{jbirrell@txstate.edu}\\
\And
Luc Rey-Bellet\\
Department of Mathematics and Statistics\\
University of Massachusetts Amherst\\
710 N Pleasant St, Amherst, MA, 01002, USA\\
\texttt{luc@math.umass.edu}\\
\And
Dedicated to Professor Paul Dupuis on the occasion of his 65th birthday.
}

\begin{document}

\maketitle

\begin{abstract}
In this work we provide performance guarantees for hypocoercive non-reversible MCMC samplers $X_t$ with invariant measure       $\mu_*$; our results apply in particular to the Langevin equation, Hamiltonian Monte-Carlo, and the bouncy particle and  zig-zag samplers. Specifically, we establish a concentration inequality of Bernstein type for ergodic averages $\frac{1}{T} \int_0^T f(X_t)\, dt$. As a consequence we provide two types of performance guarantees:  (a) explicit non-asymptotic confidence intervals  for $\int f d\mu_*$ when using a finite time ergodic average with given initial condition $\mu$ and (b) uncertainty quantification (UQ) bounds, expressed in terms of relative entropy rate, on the bias of $\int f d\mu_*$ when using  an alternative or approximate processes $\widetilde{X}_t$. (Results in (b) generalize  results from \cite{BRBMarkovUQ} for coercive dynamics.) The concentration inequality is proved by combining the approach via Feynman-Kac semigroups first noted by \cite{Wu} with the hypocoercive estimates of \cite{Dolbeault2009,Dolbeault2015} developed for the Langevin equation and  generalized to partially deterministic Markov processes by  \cite{ADNR2018}.   
\end{abstract}

\keywords{concentration inequality \and  uncertainty quantification \and  hypocoercivity \and  Langevin equation \and  Bouncy particle sampler \and  Zig-zag
sampler \and  Hybrid Hamiltonian Monte-Carlo}

\section{Introduction and statement of the results}\label{introduction}

 Consider the problem of computing the  expected value 
\begin{align}
\nu_*[f]\equiv \int f(q)  d\nu_*(q)  \textrm{ with }   d \nu_* =  Z^{-1} e^{- \beta V(q)} dq
\end{align}
for some given function $f: \mathcal{Q} \to \mathbb{R}$ with $\mathcal{Q}\subset \mathbb{R}^d$. If the normalization constant $Z$ is unknown or prohibitive to compute, it can be advantageous to construct a ergodic stochastic process $Q_t$ with stationary distribution $\nu_*$ (in this paper, only  continuous-time processes are considered) and use the fact that, by the strong law of large numbers and for suitable $f$, one has
\begin{align}
\lim_{T \to \infty} \frac{1}{T} \int_0^T f(Q_t) \, dt  = \nu_*[f]\,.
\end{align} 
Such a process $Q_t$ is usually called a Monte-Carlo Markov chain (MCMC) and 
we can then use the finite time average $\frac{1}{T} \int_0^T f(Q_t)dt$ as an estimator for $\nu_*[f]$. 
There are of course multiple choices of stochastic processes with invariant 
measure $\nu_*$ and in order to decide which process  to use  we need to evaluate its performance.

While traditional Monte-Carlo algorithms are often built to be reversible, in recent years  non-reversible algorithms have attracted a lot of attention because of their potential to sample the space in a more efficient manner, in particular in the context of Bayesian statistics and molecular dynamics   (see for example \cite{DHN,HwHwSh2005,Bi2016,DLP2016,RBS3} and many more references therein).
In this paper we consider a variety of non-reversible MCMC samplers such as the Langevin equation, and various modifications thereof, as well as partially deterministic Markov processes such as the zig-zag sampler (\cite{BFR2019}),  the bouncy particle sampler (\cite{PetersdeWith2012}), and the hybrid Hamiltonian Monte-Carlo (\cite{Duane1987}).  
Each of these samplers are  constructed by extending the phase space from $\mathcal{Q}$ to $\mathcal{X} =  \mathcal{Q} \times \mathcal{P}$ and then constructing a non-reversible MCMC in the extended phase space with an invariant measure $\mu_*=\nu_* \times \rho_*$. 
The extra dimension  $\mathcal{P}$ can be thought as  momentum space and the dynamics considered here combine a conservative Hamiltonian-type dynamics with a dissipative sampling mechanism for the measure $\rho_*$ in the momentum variable $p \in \mathcal{P}$. For example $\rho_*$ may be a Gaussian distribution, although other choices are possible; we do assume $\rho_*$ has mean zero and nonzero covariance.

All of the algorithms we consider here have been proved to be hypocoercive. 
The concept of hypocoercivity was formalized by Villani  to describe dynamics 
which do not satisfy  a Poincar\'e inequality (otherwise they would be called coercive) but yet converge exponentially fast to equilibrium in $L^2(\mu_*)$.  In a series of work \cite{Desvillettes2001,HerauNier2004,EckmannHairer2003,Herau2006,villani2009hypocoercivity} it was proved that the Langevin equation is hypocoercive (see also \cite{EPR1998,EckmannHairer2000,RBT2002,Mattingly2002} for some earlier and related convergence results). 
   More recently \cite{Dolbeault2009,Dolbeault2015} found a new, short and very elegant, proof of hypocoercivity, and their techniques have been used for various modifications of the Langevin equation \cite{Iacobucci2017,StoltzTrs2018,StoltzVdE} 
   (some of them without hypoellipticity) 
   and    for a class of partially deterministic Markov  processes \cite{ADNR2018}, among them:
   \begin{enumerate}
   \item The bouncy particle sampler, which was introduced  in \cite{PetersdeWith2012} and whose ergodic properties were studied in \cite{BVC2018} and \cite{WR2017}. 
   \item The zig-zag sampler, introduced in \cite{BFR2019} and further studied 
   in \cite{BRZ2017}, which generalize to higher dimension the so-called telegraph process studied earlier in \cite{FGM2012,FGM2016} and \cite{MonMarch2014}. 
   \item The hybrid Hamiltonian Monte Carlo introduced by \cite{Duane1987} and whose ergodic properties are studied in \cite{BRSS2017}, see also  \cite{ELi2008} and  \cite{Neal2011}. 
   \end{enumerate}
   
 To explain this result,    decompose the generator $A$ of the dynamics on the Hilbert space $L^2(\mu_*)$ into symmetric and antisymmetric parts,  
$A= S+T$ with $S^*=S$, $T^*=-T$, denote by $\Pi$ the projection of $L^2(\mu_*)$ onto $L^2(\nu_*)$ given $\Pi(f)(q) = \int f(q,p) d \rho_*(p)$, and consider the operator 
\begin{align}
B= (I +  (T \Pi)^* (T \Pi))^{-1} (- T \Pi)^*  
\end{align}
and the family of modified scalar products $\langle  f,\, g\rangle_\epsilon = \langle  f,\,(I + \epsilon G) g\rangle$, where $G=B+B^*$; here, and in the following, $\langle \cdot,\cdot\rangle$ will denote the $L^2(\mu_*)$-scalar product. Under suitable conditions (more details are in Section \ref{sec:hypocoercive})
the norms induced by these modified scalar products are equivalent to the $L^2(\mu_*)$-norm and it is shown in \cite{Dolbeault2009,Dolbeault2015,ADNR2018} that, for sufficiently small values of $\epsilon>0$, the dynamics satisfy a Poincar\'e inequality for the modified scalar product
    \begin{equation}\label{eq:hypocoercive-epsilon}
    \langle - A f , f \rangle_\epsilon \ge \Lambda(\epsilon) \, {\rm Var}_{\mu_*}(f) \,,	
    \end{equation}
where $\Lambda(\epsilon)>0$ can be explicitly bounded in terms of  the Poincar\'e constant of the measure $\nu_*$, the spectral gap of the sampling dynamics for $\rho_*$, and properties of the potential $V$ (see Section \ref{sec:hypocoercive}).  

%
%

In this paper we leverage this approach to hypocoercivity to prove concentration inequalities (of Bernstein type) for finite time ergodic averages of a function (observable) $f:\mathcal{X}\to\mathbb{R}$:
\begin{align}
F_T=\frac{1}{T}\int_0^T f(X_t)dt\,.
\end{align}
For reversible processes, or more generally for processes whose reversible part satisfies a Poincar\'e inequality, that is for {\em coercive} processes,  concentration inequalities were obtained  first in \cite{lezaud:hal-00940906}   and then both simplified and greatly generalized in \cite{Wu,cattiaux_guillin_2008,Guillin2009,GaoGuillinWu}; our approach relies heavily on the ideas developed in those works.

Concentration inequalities are very useful for providing performance guarantees (e.g., confidence intervals) valid for all time $T$ (and which do not rely on the central limit theorem).  In practice, algorithm performance is typically evaluated in terms of the asymptotic variance 
\begin{align}
\sigma^2(f) \equiv \lim_{T \to \infty} T \Var \left( \frac{1}{T}\int_0^T f(X_t) \, dt\right);
\end{align} we take here the alternative point of view of using  concentration inequalities, thereby obtaining {\em non-asymptotic} performance guarantees.  In a related manner, it has been advocated in \cite{DupuisSwapping1} that, to evaluate the performance of a MCMC algorithm,  one consider the large deviation rate for the empirical measure itself, an approach used in \cite{RBS1} to analyze  non-reversible perturbations of the overdamped Langevin equation. 

We prove the following non-asymptotic performance guarantee in Section \ref{sec:hypocoercive} (see Corollary \ref{cor:confidence-interval}). Here, and in the following, $(X_t,P^\mu)$ will denote a $\mathcal{X}$-valued Markov process with initial distribution $X_0 \sim \mu$ (i.e., $\mu=(X_0)_*P^\mu$ is the pushforward of $P^\mu$ by $X_0$), $E^\mu$ will be the expectation with respect to $P^\mu$, and  $\|\cdot\|$ will denote the $L^2(\mu_*)$-norm.

\begin{theorem}{\bf(Non-asymptotic confidence intervals)}\label{thm:hypo-conf}
Suppose that the Markov process $(X_t,P^\mu)$ satisfies the hypocoercive estimate \eqref{eq:hypocoercive-epsilon}. Then for any bounded observable $f$, any time $T>0$, and tolerance level $0< 1-\delta < 1$  
we have 
\begin{align}
P^\mu\left( \left| \frac{1}{T}\int_0^Tf(X_t)dt-\mu_*[f]\right| \le r \right)\geq 1-\delta\,,
\end{align}
where    
\begin{align}
r =\sqrt{2 v \frac{1}{T}\log\left(\frac{2N}{\delta}\right)} + b
\frac{1}{T}\log\left(\frac{2N}{\delta}\right)\,,
\end{align}
with 
\begin{align} \label{eq:vbM}
v=\frac{ (1+\epsilon) (1 - \frac{\epsilon^2}{4})}{1-\epsilon} 
\frac{2 \Var_{\mu_*}[f]}{\Lambda(\epsilon)}\,, \,  b =  \frac{(1+\epsilon)^2}{1-\epsilon}\frac{\|\widehat{f}\|_\infty}{\Lambda(\epsilon)}\,,\, N = \frac{\left\|\frac{d\mu}{d\mu_*}\right\| }{\sqrt{1-\epsilon}}\,.
\end{align}
$\Lambda(\epsilon)$ is defined in \req{Lambda_m_def} and $\epsilon\in(0,1)$ must satisfy \req{eq:Lambda_pos}.
\end{theorem}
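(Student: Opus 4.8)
The plan is to obtain the confidence interval as the Chernoff/Legendre dual of a Bernstein-type bound on the exponential moments of the ergodic average, the latter established by combining the Feynman--Kac semigroup technique of \cite{WU2000435} with the hypocoercive Poincar\'e inequality \eqref{eq:hypocoercive-epsilon}. Write $\widehat f=f-\mu^*[f]$, let $(P^g_t)_{t\ge 0}$ denote the Feynman--Kac semigroup with generator $A+g$, and fix $\theta\ge 0$. The Markov property and a Chernoff bound give
\begin{align}
P^\mu\!\left(\tfrac{1}{T}\!\int_0^T\!\widehat f(X_t)\,dt\ge s\right)\le e^{-\theta T s}\,E^\mu\!\left[e^{\theta\int_0^T\widehat f(X_t)\,dt}\right]=e^{-\theta T s}\,\big\langle\tfrac{d\mu}{d\mu^*},\,P^{\theta\widehat f}_T\mathbf 1\big\rangle\,,
\end{align}
and two applications of Cauchy--Schwarz — first in $L^2(\mu^*)$ to split off $\|d\mu/d\mu^*\|$, then using $\|\cdot\|\le(1-\epsilon)^{-1/2}\|\cdot\|_\epsilon$ together with $\|\mathbf 1\|_\epsilon=1$ (which holds since $G\mathbf 1=0$) — reduce matters to controlling the growth rate of $\|P^{\theta\widehat f}_T\|_\epsilon$; this is where the prefactor $N$ comes from.

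That growth rate is controlled by the numerical range of the Feynman--Kac generator in the modified inner product: if $\lambda(\theta):=\sup\{\langle(A+\theta\widehat f)h,h\rangle_\epsilon:\|h\|_\epsilon=1\}$ then a Gr\"onwall argument applied to $t\mapsto\|P^{\theta\widehat f}_t h\|_\epsilon^2$ yields $\|P^{\theta\widehat f}_T\|_\epsilon\le e^{T\lambda(\theta)}$. To estimate $\lambda(\theta)$ I would decompose $h=\mu^*[h]+h'$ with $\mu^*[h']=0$: since $Ah$ annihilates constants, \eqref{eq:hypocoercive-epsilon} gives $\langle Ah,h\rangle_\epsilon\le-\Lambda(\epsilon)\|h'\|^2$; the pure-mean part of $\theta\langle\widehat f h,h\rangle_\epsilon$ vanishes because $\mu^*[\widehat f]=0$; the cross term is $\le 2\theta\,|\mu^*[h]|\,\Var_{\mu^*}[f]^{1/2}\,\|h'\|$ and, after completing the square against $\Lambda(\epsilon)\|h'\|^2$, produces the variance proxy $v$; and the diagonal fluctuation term is $\le\theta\,\|\widehat f\|_\infty\,\|h'\|^2$, which competes with the gap term and produces the scale $b$. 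The norm-equivalence constants $1\pm\epsilon$ and $1-\epsilon^2/4$ enter through the normalization $\|h\|_\epsilon=1$ and through the $\epsilon G$ correction inside $\langle\widehat f h,h\rangle_\epsilon$, and carrying them through the square-completion is exactly what makes the constants in \eqref{eq:vbM} come out as stated; the restriction \eqref{eq:Lambda_pos} on $\epsilon$ (and the implicit constraint $\theta<1/b$) is what keeps $\Lambda(\epsilon)>0$ and the completion of the square valid. The outcome is $\lambda(\theta)\le\psi(\theta):=\tfrac{v\theta^2/2}{1-b\theta}$ for $0\le\theta<1/b$.

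Finally I would optimize over $\theta$: combining the previous displays gives $P^\mu(\tfrac{1}{T}\int_0^T\widehat f\,dt\ge s)\le N\exp(-T\psi^*(s))$ with $\psi^*(s)=\sup_{\theta\ge0}(\theta s-\psi(\theta))$ the Legendre transform of $\psi$, and the standard inversion $(\psi^*)^{-1}(u)=\sqrt{2vu}+bu$ shows that choosing $s=r$ makes the right-hand side equal to $\delta/2$; running the same argument for $-\widehat f$ and taking a union bound yields the two-sided estimate with $\log(2N/\delta)$. The main obstacle is the perturbation estimate for $\lambda(\theta)$ in the hypocoercive inner product — one has to propagate the equivalence constants through the square-completion so that the variance proxy is the sharp $v$ of \eqref{eq:vbM} rather than a cruder multiple — together with the attendant operator-theoretic bookkeeping (form cores, the non-symmetry of $A$ and of the $\epsilon G$ correction in $\langle\cdot,\cdot\rangle_\epsilon$, and positivity preservation of the Feynman--Kac semigroup) needed to legitimize passing from the quadratic-form bound on $A+\theta\widehat f$ to the norm bound $\|P^{\theta\widehat f}_T\|_\epsilon\le e^{T\lambda(\theta)}$ and the Feynman--Kac representation of the exponential moment.
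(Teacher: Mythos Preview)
Your overall strategy is exactly the paper's: Chernoff bound $\Rightarrow$ Feynman--Kac representation $\Rightarrow$ Lumer--Phillips/Gr\"onwall bound on $\|P^{\theta\widehat f}_T\|_\epsilon$ in terms of the numerical range of $A+\theta\widehat f$ $\Rightarrow$ decomposition $h=\mu^*[h]\mathbf 1+h'$ and completion of the square $\Rightarrow$ Legendre inversion $\Rightarrow$ union bound. The prefactor $N$, the function $\psi(\theta)=\tfrac{v\theta^2/2}{1-b\theta}$, and its inverse Legendre transform are all identified correctly.

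The one place your sketch is genuinely loose is the cross term. You write it as $\le 2\theta\,|\mu^*[h]|\,\Var_{\mu^*}[f]^{1/2}\,\|h'\|$; if that were the bound, completing the square against $\Lambda(\epsilon)\|h'\|^2$ would give $v=2\Var_{\mu^*}[f]/\Lambda(\epsilon)$, not the $v$ in \eqref{eq:vbM}. In the $\epsilon$-inner product the cross term is $2\mu^*[h]\,\big\langle h',\,\tfrac12(M_{\widehat f}+M_{\widehat f}^{\dagger_\epsilon})\mathbf 1\big\rangle_\epsilon$, where $\dagger_\epsilon$ is the adjoint for $\langle\cdot,\cdot\rangle_\epsilon$; this is the quantity that must be controlled, not $\langle\widehat f,h'\rangle$. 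The paper isolates this step as an abstract Hilbert-space perturbation lemma (their Lemma \ref{perturb_lemma}) applied in $(L^2(\mu^*),\langle\cdot,\cdot\rangle_\epsilon)$ with $\alpha=(1+\epsilon)/\Lambda(\epsilon)$, and then separately computes
\[
M_{\widehat f}^{\dagger_\epsilon}=(I+\epsilon G)^{-1}M_{\widehat f}(I+\epsilon G),\qquad \tfrac12(M_{\widehat f}+M_{\widehat f}^{\dagger_\epsilon})\mathbf 1=\widehat f-\tfrac{\epsilon}{2}(I+\epsilon G)^{-1}G\widehat f,
\]
and bounds $\bigl\|\tfrac12(M_{\widehat f}+M_{\widehat f}^{\dagger_\epsilon})\mathbf 1\bigr\|_\epsilon^2\le \tfrac{1-\epsilon^2/4}{1-\epsilon}\,\Var_{\mu^*}[f]$ and $\sup_{\|g\|_\epsilon=1}\langle M_{\widehat f}g,g\rangle_\epsilon\le \tfrac{1+\epsilon}{1-\epsilon}\|\widehat f\|_\infty$. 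These two estimates, multiplied by $\alpha=(1+\epsilon)/\Lambda(\epsilon)$, are precisely what give the factors $(1+\epsilon)(1-\epsilon^2/4)/(1-\epsilon)$ and $(1+\epsilon)^2/(1-\epsilon)$ in \eqref{eq:vbM}. Your sentence that the $\epsilon$-factors ``enter through the $\epsilon G$ correction and carrying them through the square-completion'' is pointing at the right place, but the computation of $M_{\widehat f}^{\dagger_\epsilon}\mathbf 1$ is the missing ingredient without which the stated constants do not emerge from your cross-term estimate.
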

 We emphasize that the novelty of Theorem \ref{thm:hypo-conf} lies in our technique for obtaining concentration inequalities from  a Poincar\'e inequality  in a modified inner product, thereby extending  established non-asymptotic results for coercive systems  \cite{lezaud:hal-00940906,Wu,cattiaux_guillin_2008,Guillin2009,GaoGuillinWu} to the hypocoercive setting for the first time. As our derivation fundamentally relies on the convergence rate obtained via the hypocoercivity method of \cite{Dolbeault2009,Dolbeault2015}, we note that Theorem \ref{thm:hypo-conf} does not provide substantial new insight regarding performance comparisons between coercive and hypocoercive samplers, beyond what is already apparent from the (modified)  Poincar\'e inequalities. For the derivation of explicit constants that are the input to our results,  we refer the reader to, e.g., \cite{ADNR2018}.

In addition to concentration inequalities, we also prove a robustness result for the dynamics, with respect to model-form uncertainty. For such uncertainty quantification (UQ) bounds, we think of $(X_t,P^\mu)$, called the baseline model, as an imperfect representation of a ``true" (or at least, more precise) alternative model. This alternative model may not be fully known, or it might be intractable (analytically or numerically), and so one may want to investigate how sensitive the results for the baseline model are to (not necessarily small) model perturbations. The next theorem provides such performance guarantees, generalizing the results in \cite{BRBMarkovUQ}, and is based on the general approach to uncertainty quantification introduced in \cite{chowdhary_dupuis_2013} and further developed in \cite{DKPP,KRW,GKRW,birrell2021quantification}.  In this context, the goal is to control the bias 
\[
\widetilde{E}^{\widetilde{\mu}}\left[ \frac{1}{T}\int_0^Tf(\widetilde{X}_t)\, dt \right] - \mu_*[f]\,,
\]
where  $(\widetilde{X}_t,\widetilde{P}^{\widetilde{\mu}})$ with $\widetilde X_0 \sim \widetilde{\mu}$ 
is the alternative model and $\widetilde{E}^{\widetilde{\mu}}$ is the expectation with respect to $\widetilde{P}^{\widetilde{\mu}}$.   We denote by $P^\mu_T$ and $\widetilde{P}_T^{\widetilde{\mu}}$  the path-space  distributions of the base and alternative models on the time window $[0,T]$ and  prove the following result in Section \ref{sec:concentration} (see Theorem \ref{thm:UQ_mod_poincare}). 
%
%
%
%
%
\begin{theorem}{\bf(Uncertainty quantification bounds)}\label{thm:hypo-uq}  
Suppose that the baseline Markov process $X_t$ satisfies the hypocoercive estimate \eqref{eq:hypocoercive-epsilon} and $(\widetilde X_t,\widetilde{P}^{\widetilde{\mu}})$ is a stochastic process such that the path space relative entropy satisfies
\begin{align}
R \big( \widetilde{P}_T^{\widetilde{\mu}}\|P_T^{\mu_*}\big) <\infty\,.
\end{align}	
Then for any bounded observable $f$ and any time $T>0$, we have    
\begin{align}  
	\left| \widetilde{E}^{\widetilde\mu} \left[ \frac{1}{T}\int_0^Tf(\widetilde X_t) \, dt   \right] - \mu_*[f] \right|
\leq &  \sqrt{ 2 v \eta_T}+  b\eta_T \,,
\end{align}  
where $v$ and $b$ are given in \eqref{eq:vbM},  and 
\begin{align}
\eta_T =\frac{1}{T} \left(\log\left((1-\epsilon)^{-1/2}\right)+  R\big(\widetilde P^{\widetilde{\mu}}_T\|P^{\mu_*}_T\big) \right)\,.\notag
\end{align}
\end{theorem}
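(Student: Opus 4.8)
The plan is to obtain the bias bound from two standard ingredients: a uniform-in-$T$ exponential moment (logarithmic moment generating function) estimate for the \emph{stationary} baseline process $(X_t,P^{\mu^*})$, and the Donsker--Varadhan variational formula for relative entropy on path space, which turns the hypothesis $R(\widetilde P^{\widetilde\mu}_T||P^{\mu^*}_T)<\infty$ into a one-sided control of the bias; a Legendre-transform optimization in the tilting parameter then produces the Bernstein-type quantity $\sqrt{2v\eta_T}+b\eta_T$.

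Write $\widehat f=f-\mu^*[f]$. The first step is to record the exponential moment bound underlying the concentration inequality of this section, in particular Theorem~\ref{thm:hypo-conf}: for $0\le\lambda<1/b$,
\begin{align}
E^{\mu^*}\!\left[\exp\!\left(\lambda\int_0^T\widehat f(X_t)\,dt\right)\right]\le (1-\epsilon)^{-1/2}\exp\!\left(\frac{v\lambda^2T/2}{1-b\lambda}\right)\,.\notag
\end{align}
This is exactly the estimate behind Theorem~\ref{thm:hypo-conf} specialized to $\mu=\mu^*$, so that the prefactor $N$ in \eqref{eq:vbM} degenerates to $\|d\mu^*/d\mu^*\|/\sqrt{1-\epsilon}=(1-\epsilon)^{-1/2}$. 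It is obtained via the Feynman--Kac semigroup $P^\lambda_t\phi(x)=E^x[\exp(\lambda\int_0^t\widehat f(X_s)\,ds)\phi(X_t)]$, whose generator is $A+\lambda\widehat f$: one writes $E^{\mu^*}[\exp(\lambda\int_0^T\widehat f(X_t)\,dt)]=\langle\mathbf 1,P^\lambda_T\mathbf 1\rangle$, bounds $\|P^\lambda_T\mathbf 1\|$ by the operator norm $\|P^\lambda_T\|$, and estimates the exponential growth rate of the latter using the hypocoercive Poincar\'e inequality \eqref{eq:hypocoercive-epsilon} in the $\epsilon$-modified inner product together with the perturbation bound $\langle\lambda\widehat f\phi,\phi\rangle\le\lambda\|\widehat f\|_\infty\|\phi\|^2$ and the norm-equivalence constants $(1\pm\epsilon)$; the constants $v$ and $b$ of \eqref{eq:vbM} are precisely what this bookkeeping yields.

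Next, the Gibbs (Donsker--Varadhan) inequality on path space, applied with reference measure $P^{\mu^*}_T$, test measure $\widetilde P^{\widetilde\mu}_T$, and the path functional $g=\lambda\int_0^T\widehat f(\widetilde X_t)\,dt$, gives
\begin{align}
\lambda\,\widetilde E^{\widetilde\mu}\!\left[\int_0^T\widehat f(\widetilde X_t)\,dt\right]\le{}&\log E^{\mu^*}\!\left[\exp\!\left(\lambda\int_0^T\widehat f(X_t)\,dt\right)\right]\notag\\
&+R\big(\widetilde P^{\widetilde\mu}_T||P^{\mu^*}_T\big)\,.\notag
\end{align}
Dividing by $\lambda T$ and inserting the moment bound yields, with $\eta_T$ as in the statement,
\begin{align}
\widetilde E^{\widetilde\mu}\!\left[\frac1T\int_0^Tf(\widetilde X_t)\,dt\right]-\mu^*[f]\le\frac{\eta_T}{\lambda}+\frac{v\lambda/2}{1-b\lambda}\,,\qquad 0<\lambda<1/b\,.\notag
\end{align}
Minimizing the right-hand side over $\lambda\in(0,1/b)$ is the classical Bernstein computation: the minimum is attained at $\lambda=\big(b+\sqrt{v/(2\eta_T)}\big)^{-1}$, which is always admissible for $\eta_T,v>0$, and equals $\sqrt{2v\eta_T}+b\eta_T$. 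Running the same chain of inequalities with $f$ replaced by $-f$ (which leaves $v$ and $b$ unchanged, since $\Var_{\mu^*}[-f]=\Var_{\mu^*}[f]$ and the centering of $-f$ is $-\widehat f$) produces the matching lower bound, hence the two-sided estimate.

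I expect the only genuine obstacle to be the first step: showing that the hypocoercive estimate \eqref{eq:hypocoercive-epsilon} survives the tilt of the generator by the multiplication operator $\lambda\widehat f$ and delivers the stated growth rate with the precise constants $v$, $b$ and prefactor $(1-\epsilon)^{-1/2}$, which requires carrying the $\epsilon$-dependent norm-equivalence constants carefully through the Feynman--Kac argument. Once that bound is in hand --- and it is essentially the same engine that proves Theorem~\ref{thm:hypo-conf} --- the Donsker--Varadhan step and the Legendre optimization are entirely routine.
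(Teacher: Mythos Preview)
Your proposal is correct and follows essentially the same route as the paper: the paper proves the more general Theorem~\ref{thm:UQ_mod_poincare} (for arbitrary initial distribution $\mu$) by applying the Gibbs/Donsker--Varadhan inequality on path space and then bounding the resulting moment generating function via the Feynman--Kac semigroup and Lumer--Phillips in the $\epsilon$-modified inner product (using Lemma~\ref{perturb_lemma} for the tilt), and Theorem~\ref{thm:hypo-uq} is precisely the specialization to $\mu=\mu^*$, where the extra bias term $E^\mu[F_T]-\mu^*[f]$ vanishes by stationarity and $\|d\mu/d\mu^*\|=1$. Your sketch of the exponential moment bound is slightly informal (the paper uses the decomposition in Lemma~\ref{perturb_lemma} rather than just $\langle\lambda\widehat f\phi,\phi\rangle\le\lambda\|\widehat f\|_\infty\|\phi\|^2$ to obtain the precise constants $v,b$), but you correctly identify this as the same engine behind Theorem~\ref{thm:hypo-conf} and flag it as the only nontrivial step.
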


Note that if the perturbed dynamics $(\widetilde X_t,\widetilde{P}^{\widetilde{\mu}})$ is itself ergodic (Markovian or not),  one can often prove that the entropy production rate $\lim_{T \to \infty}\frac{1}{T}R\big(\widetilde P^{\widetilde{\mu}}_T\|P^{\mu_*}_T\big) $ exists and is finite. In that case, Theorem \ref{thm:hypo-uq}
provides an uncertainty quantification bound for the expectation of $f$ under corresponding stationary distributions.

The remainder of this paper is organized as follows.  In Section \ref{sec:hypocoercive}
we review several examples of hypocoercive systems to which our results apply.  There, we also give an overview of the hypocoercivity method of \cite{Dolbeault2009,Dolbeault2015}.  This method is a crucial tool in the proofs of our new results, namely the concentration inequalities and UQ bounds outlined above; proofs of these are given in Section \ref{sec:concentration}.

\section{Hypocoercive MCMC samplers}\label{sec:hypocoercive}  
 
In this section we introduce several examples of popular hypocoercive samplers for which the modified Poincar\'e
inequality \eqref{eq:hypocoercive-epsilon}  has been proven by following the strategy of \cite{Dolbeault2009,Dolbeault2015}. 
In particular we consider several examples of partially deterministic MCMC samplers studied in   \cite{ADNR2018}. 
 We will refer the  reader to the original papers for technical details  and content ourselves with a brief, and at times somewhat informal, overview:

Consider a probability measure $d \nu_*(q)=Z^{-1} e^{-\beta V(q)}dq$ on 
$\mathbb{R}^d$ to be sampled, and for which a Poincar\'e inequality holds, i.e., there exists a constant $C_{\nu_*}>0$ such that for all for $g \in L^2(\nu_*)$ we have  
\begin{align}
\| \nabla_q g \|^2_{L^2(\nu_*)} \ge C_{\nu_*} \Var_{\nu_*}[g]\,.	
\end{align}
See, e.g., \cite{bakry2008} for conditions on $V$ which imply   a Poincar\'e inequality. 

Define the product measure $\mu_*= \nu_*\times\rho_*$ 
on the extended phase space $\mathbb{R}^d \times \mathcal{P}$   and  the projection $\Pi f  = \int f d\rho_*$.  
We consider a Markov processes $X_t=(Q_t,P_t)$ on  $\mathbb{R}^d\times \mathcal{P}$ 
with invariant measure $\mu_*$ and assume standard smoothness and growth conditions on $V$ to ensure that $X_t$  induces a strongly continuous semigroup $T_t$ on $L^2(\mu_*)$ with generator $A$, and with the time-reversed process having generator  given by the adjoint $A^*$ of $A$ on $L^2(\mu_*)$.  We decompose $A$ into symmetric and  antisymmetric parts: 
\begin{equation}
A = S+T,   \textrm{  with }   S= \frac{A + A^*}{2}  \textrm{ and }     T= \frac{A - A^*}{2}\,. 
\end{equation}

The following  four examples  fit within this framework and that will be used to illustrate the utility of our results; see \cite{ADNR2018}  for a proof of hypocoercivity of a more general class of models which covers all examples considered here,  
as well as \cite{StoltzTrs2018,Iacobucci2017,StoltzVdE} for further examples (some of them being non-equilibrium as well). 

\begin{enumerate}
\item{\bf (Langevin and modified Langevin equations)} The (underdamped) Langevin equation is the system of stochastic differential equations on $\mathbb{R}^{2d}$ given by 
\begin{align}
&dQ_t=\frac{P_t}{m}dt,\,\,\,\,\,
dP_t=\left(-\nabla V(Q_t)-\gamma \frac{P_t}{m}\right)dt+\sqrt{\frac{2\gamma}{\beta}} dW_t,
\end{align}
where $m>0$ is the mass, $\beta>0$ is proportional to the inverse temperature, $\gamma>0$ is the drag coefficient,  $W_t$ is a Wiener process, and $V:\mathbb{R}^d\to\mathbb{R}$ is a smooth potential. The appropriate  $\rho_*$ is a Gaussian measure with mean $0$ and covariance matrix $m/\beta I$.  
The generator $A$ is an extension of the differential operator 
\begin{equation}
A = 	\underbrace{ \frac{\gamma}{\beta}  \Delta_p - \gamma \left(\frac{p}{m}\right)^T \nabla_p}_{=S} + \underbrace{ \left( \frac{p}{m}\right)^T \nabla_q-\nabla V(q)^T\nabla_p}_{=T}\,.
\end{equation}
This is the model originally considered in \cite{Dolbeault2009,Dolbeault2015}
and several  modifications of this models have also been shown to be hypocoercive. For example \cite{StoltzTrs2016,StoltzTrs2018} consider a Langevin equation with a modified kinetic energy (non-quadratic) so that that $\rho_*$ is not Gaussian and the diffusion needs not be hypoelliptic.  Further generalizations of the Langevin equations with general $\rho_*$ are also considered in \cite{ADNR2018}.

\item{\bf (Hybrid Hamiltonian Monte Carlo)}  In this randomized version of Hamiltonian Monte-Carlo  introduced by \cite{Duane1987}, 
 the system follows Hamiltonian equations  of motion  with Hamiltonian $V(q) + p^2/2m$ for an  exponentially distributed amount of time, after which the momentum is resampled from the Gaussian measure $\rho_*$.  The generator has the form 
\begin{align} 
A  &=  \underbrace{ \lambda(\Pi - I )}_{=S} + \underbrace{ \left( \frac{p}{m}\right)^T \nabla_q-\nabla V^T\nabla_p}_{=T}\,.
\end{align}

\item{\bf (Bouncy Particle Sampler)}  In this sampler, introduced originally in \cite{PetersdeWith2012}, a particle starting at time $t_0$ in the state $(q_0,p_0)$ moves freely  $p(t)=p(t_0)$ and $q(t) =q(t_0) + t \frac{p(t_0)}{m}$  up to the random time $t_0 + \tau$. The updating time $\tau$ is governed by two mechanisms: either the velocity of the particle is refreshed,  i.e.,  $p$ is sampled from the Gaussian $\rho_*$ (this occurs at rate $\lambda$), or the particle ``bounces", i.e., it undergoes  a Newtonian elastic collision on the hyperplane tangential to the gradient of the energy and  the momentum is updated according to 
the rule 
\begin{equation}
R(q) p =  p  - \frac{  p^T \nabla V(q)    }{ \| \nabla V\|^2}  \nabla V \,.
 \end{equation}
The time at which this happens is   governed by an inhomogeneous Poisson process of intensity 
$\lambda(q,p) =   \left[\left( \frac{p}{m}\right)^T  \nabla V(q)\right]^+$. 
If we set $R f( q,p)  =  f(q, R(q) p)$  then the generator is 
\begin{align}
	A & =  \left(\frac{p}{m}\right)^T \nabla_q    +    \left[\left( \frac{p}{m}\right)^T  \nabla V(q)\right]^+  (R - I)   + \lambda (\Pi - I) \,,
	\end{align}
and elementary computations shows that $\mu_*=\nu_*\times\rho_*$ is invariant 
and  
%
\begin{align} 
S &=   \,    \left|\left( \frac{p}{m}\right)^T  \nabla V(q)\right|  (R - I)  +  \lambda(  \Pi - I )  \,, \\
T&=   \left( \frac{p}{m}\right)^T \nabla_q  +    \left( \frac{p}{m} \right)^T  \nabla V(q) (R - I)   \,.
\end{align}

\item{\bf (Zig-Zag Sampler)}  In the zig-zag sampler, contrary to the other examples, the velocity is discrete, and, for example, $\rho_*$ is the uniform 
distribution on $\{-1,1\}^d$.  As in the bouncy sampler, the trajectories are piecewise linear.  At updating  times,  the (randomly chosen) $i$'th component 
of the velocity is reversed; see  \cite{BFR2019} for a more detailed discussion. The generator of the Markov process has the form 
\begin{equation}
A =   v^T \nabla_q    +    \sum_{i=1}^d  \left[  v_i \partial_{q_i} V(q)\right]^+  (R_i - I)   + \lambda (\Pi - I)  \,,
\end{equation}
where $R_i f(q,v) = f(q, v - 2 (e_i^T v) e_i)$ (with $e_i$ the standard basis vector in $\mathbb{R}^d$). A computation similar to the one for the bouncy sampler shows that 
\begin{align} 
S &=   \,    \sum_{i=1}^d \left|  v_i   \partial_{q_i} V (q) \right|  (R_i - I)  +  \lambda(  \Pi - I )  \,, \\
T&=   v \nabla_q  +    \sum_{i=1}^d   v_i \partial_{q_i} V(q)  (R_i - I)   \,.
\end{align}
\end{enumerate}

Note  that for all the examples considered, it is easy to verify that 
one has the identity 
\begin{equation}\label{eq:TPi}
T \Pi = \frac{p}{m} \nabla_q \Pi
\end{equation}
(with the convention that $p/m=v$ for the zig-zag sampler). 
This fact is used to establish the following functional analytic estimates (see \cite{Dolbeault2009,Dolbeault2015})   which are the basis for the hypocoercive estimates (for the convenience of the reader the proof is in Appendix \ref{app:additional_proofs}).

%
%

\begin{proposition}\label{thm:elem}
Define
\begin{equation}
B= (I +  (T \Pi)^* (T \Pi))^{-1} (- T \Pi)^* \,. 
\end{equation}
The operators $S$,  $T$, and $B$ have the following properties:
\begin{enumerate}
\item $B1 = B^*1 = 0$,
\item   $S = (I -\Pi) S (I- \Pi)$,
\item   $T \Pi = (I- \Pi) T \Pi$,
\item   $B = \Pi B = \Pi B (I - \Pi)$  and $B$ and $TB$ are bounded operators with $\|Bf \| \le 1/2 \| (I - \Pi) f\|   $ and $\|T B f \| \le  \| (I - \Pi) f\|$. 
\end{enumerate}
\end{proposition}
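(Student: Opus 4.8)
The plan is to deduce all four items from two elementary structural identities valid for each of the samplers above, together with standard properties of the generator. The first is $T\Pi=\tfrac{p}{m}\nabla_q\Pi$ from \eqref{eq:TPi}; the second is that the symmetric part annihilates functions of $q$ alone, $S\Pi=0$, which is immediate from the explicit formulas (each of $\Delta_p$, $\nabla_p$, $R-I$, $R_i-I$, $\Pi-I$ kills $\Range\Pi=L^2(\nu^*)$). I also use that $\mathcal{P}_t$ preserves constants, so $A1=0$, and that $\mu^*$ is invariant, so $A^\dagger1=0$; hence $S1=T1=0$. The one point needing care is the functional-analytic setup: under the stated smoothness and growth hypotheses on $V$ the operator $T\Pi$ is closed and densely defined (cf. \cite{Dolbeault2009,Dolbeault2015}), so $N:=(T\Pi)^\dagger(T\Pi)$ is nonnegative self-adjoint by von Neumann's theorem, $I+N\ge I$ is boundedly invertible with $\|(I+N)^{-1}\|\le1$, and $B=(I+N)^{-1}(-T\Pi)^\dagger$ extends to a bounded operator on $L^2(\mu^*)$; note also $(T\Pi)^\dagger=\Pi^\dagger T^\dagger=-\Pi T$, so $(-T\Pi)^\dagger=\Pi T$ and $N=-\Pi T^2\Pi$.

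Items 1--3 are then short. For item 3, since $\rho^*$ has mean zero, $\Pi T\Pi g=\bigl(\int\tfrac{p}{m}\,d\rho^*\bigr)^{\!T}\nabla_q(\Pi g)=0$, so $T\Pi=\Pi T\Pi+(I-\Pi)T\Pi=(I-\Pi)T\Pi$. For item 2, $S\Pi=0$ was just noted and $S^\dagger=S$ gives $\Pi S=(S\Pi)^\dagger=0$, hence $(I-\Pi)S(I-\Pi)=(I-\Pi)S=S$. For item 1, $T1=0$ gives $(-T\Pi)^\dagger1=\Pi T1=0$, so $B1=(I+N)^{-1}(-T\Pi)^\dagger1=0$; and $N1=-\Pi T^2\Pi1=-\Pi T^2 1=0$ (using $\Pi1=1$ and $T1=0$), so $(I+N)^{-1}1=1$ and therefore $B^\dagger1=(-T\Pi)(I+N)^{-1}1=-T\Pi1=-T1=0$.

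For item 4: since $N=-\Pi T^2\Pi$ commutes with $\Pi$, so does $(I+N)^{-1}$; as $(-T\Pi)^\dagger=\Pi T$ has range in $\Range\Pi$, the composition $B$ has range in $\Range\Pi$, i.e. $\Pi B=B$. Moreover $(-T\Pi)^\dagger\Pi=\Pi T\Pi=0$ by item 3, so $B\Pi=0$, and hence $B=\Pi B=\Pi B(I-\Pi)$. For the norm estimates, set $h:=Bf\in\Range\Pi\cap D(T\Pi)$, so that $Th=T\Pi h$ and $(I+N)h=(-T\Pi)^\dagger f$; pairing with $h$ (first for $f$ in the dense domain of $(T\Pi)^\dagger$, then extending, since $T\Pi B$ is bounded), and using the adjoint relation, $h=\Pi h$, and $\Pi T\Pi=0$, one obtains
\[
\|h\|^2+\|T\Pi h\|^2=-\langle(I-\Pi)f,\,T\Pi h\rangle\le\|(I-\Pi)f\|\,\|T\Pi h\|.
\]
This yields $\|T\Pi h\|\le\|(I-\Pi)f\|$, hence $\|TBf\|=\|T\Pi h\|\le\|(I-\Pi)f\|$; and $\|h\|^2\le\|T\Pi h\|\bigl(\|(I-\Pi)f\|-\|T\Pi h\|\bigr)\le\tfrac14\|(I-\Pi)f\|^2$ by the arithmetic--geometric mean inequality, hence $\|Bf\|=\|h\|\le\tfrac12\|(I-\Pi)f\|$. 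The main obstacle is thus essentially the preliminary bookkeeping for the unbounded operator $T\Pi$; once $N$, $(I+N)^{-1}$ and the bounded extension of $B$ are in place the rest is the clean Dolbeault--Mouhot--Schmeiser computation, the only real subtlety being to get the sharp constant $1/2$ by optimizing the last inequality instead of invoking Young's inequality.
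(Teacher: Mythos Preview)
Your proof is correct and follows essentially the same route as the paper's: items 2 and 3 via $S\Pi=0$ and $\Pi T\Pi=0$, item 4 by pairing the defining identity $(I+N)Bf=\Pi Tf$ with $Bf$ and then bounding $\|Bf\|^2+\|T\Pi Bf\|^2\le\|(I-\Pi)f\|\,\|T\Pi Bf\|$. The only differences are cosmetic: you spell out the von Neumann setup for $N$ and the commutation of $(I+N)^{-1}$ with $\Pi$, whereas the paper leaves these implicit; and your closing remark contrasting ``optimizing'' with ``Young's inequality'' is a distinction without a difference---the paper obtains the constant $1/2$ by exactly the Young-type estimate $ab\le\tfrac14a^2+b^2$, which is the same as your $x(a-x)\le a^2/4$.
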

 
 Next, define the family of modified scalar products on $L^2(\mu_*)$,  
 \begin{align}\label{mod_norm_def}
 \langle f , g\rangle_{\epsilon}=\langle f, g\rangle + \epsilon\langle f ,(B+B^*) g\rangle\,, \,\,\,\epsilon \in (0,1)\,.
  \end{align}
As $\|B\|\le 1/2$,  $\langle \cdot ,  \cdot \rangle_{\epsilon}$ is an inner product whose induced norm is equivalent to that of $\langle \cdot ,  \cdot \rangle$. 
As a consequence of Proposition \ref{thm:elem} one obtains for suitable
$f$ that satisfy $\mu_*[f]=0$:
\begin{align}\label{eq:decomp-hypo}  
\langle A f ,   f \rangle_\epsilon 
=& \langle S f ,  f \rangle + \epsilon 
\left[ 
\langle B S f,f \rangle  +  \langle  B T  f,f  \rangle  +  \langle S B f ,   f \rangle -  \langle T B f , f \rangle  
 \right]   \\
=&  \langle  (S - \epsilon TB) (I-\Pi)f ,  (I-\Pi)  f \rangle  + \epsilon  \langle B T \Pi  f ,  \Pi f \rangle \notag  \\
& + \epsilon\left[ \langle B S (I-\Pi) f ,  \Pi f \rangle  +  \langle B T (I-\Pi) f ,  \Pi f \rangle \right] \,, \notag
\end{align}
where we have used that $SB=0$.   The various terms in \eqref{eq:decomp-hypo} 
can be bounded as follows:
\begin{enumerate}
\item The term $\langle (S-\epsilon TB) (I-\Pi)f ,  (I-\Pi) f\rangle$ is controlled by the dissipative term in the $p$-variables (since $TB$ is bounded) and it is not difficult to see that in the cases considered here we have a Poincar\'e inequality in the $p$-variables (averaged over $\nu_*$):
\begin{align}\label{Poincare-p}
	 \langle f , -S  f \rangle \ge \lambda_p \|(I-\Pi) f\|^2
\end{align}
for some $\lambda_p>0$. For the Langevin equation $\lambda_p=\frac{\gamma}{\beta}$ is the spectral gap of the Ornstein-Uhlenbeck process, while for the other examples we can take $\lambda_p=\lambda$ from the velocity resampling mechanism.  
\item For the term $\langle B T \Pi  f ,  \Pi f \rangle$, note that using \eqref{eq:TPi} together with the Poincar\'e inequality for the measure $\nu_*$, we have 
 \begin{align}
	\langle   f , (T\Pi^* T\Pi) f \rangle =  \Pi\left(\frac{p^2}{m^2}\right) 
	 \|\nabla_q \Pi f\|^2 \ge \Pi\left(\frac{p^2}{m^2}\right) C_{\nu_*} \|\Pi f\|^2    \,,
\end{align}	
where $C_{\nu_*}$ is the Poincar\'e constant for the measure $\nu_*$. Then, since
 $ - BT\Pi = (I+(T\Pi)^* T\Pi)^{-1} (T\Pi)^* T\Pi$, by 
functional calculus we have  
\begin{align}
\langle -BT\Pi f, \Pi f \rangle \ge \left( 1 - \left(1 + \Pi\left(\frac{p^2}{m^2}\right) C_{\nu_*}\right)^{-1} \right)\|\Pi f \|^2 \equiv \lambda_q \|\Pi f \|^2\,,
\end{align}
where $\lambda_q\in(0,1)$.

\item For the off-diagonal terms it is enough to show that they are bounded, i.e., 
\begin{align} 
\|BT(I - \Pi)f\| + \|BS(I - \Pi)f\| \le R_0\|(I- \Pi)f\| \,.
\end{align}
The bound of the first term is the technical part of the proof; for the Langevin equation this is proved in \cite{Dolbeault2015}, and is generalized in \cite{ADNR2018} for the other samplers (see Lemma 29 and Lemma 32 in particular and the bound in Section 3.3 as well as the bound in Lemma 11 which is specific to the zig-zag sampler).

\end{enumerate}

Based on these estimates, one has constants $\lambda_q,\lambda_p,R_0> 0$ such that for any $f$ with $\mu_*[f]=0$:
\begin{align}\label{eq:matrix-hypo}  
\langle -A f ,  f \rangle_\epsilon \,&\ge \,\begin{bmatrix}
\|\Pi f\|\\
\|(I-\Pi)f\|
\end{bmatrix}^T\begin{bmatrix}
\epsilon\lambda_q &-\epsilon R_0/2\\
-\epsilon R_0/2 & \lambda_p-\epsilon
\end{bmatrix}\begin{bmatrix}
\|\Pi f\|\\
\|(I-\Pi)f\|
\end{bmatrix}\\
&\ge \Lambda(\epsilon) \Var_{\mu_*}[f] \notag \,,
\end{align}
where  
\begin{align}\label{Lambda_m_def}
\Lambda(\epsilon)\equiv\frac{(\lambda_q-1)\epsilon+\lambda_p -\sqrt{\left(( \lambda_q+1)\epsilon-\lambda_p \right)^2+\epsilon^2 R_0^2}}{2}
\end{align} is the smallest eigenvalue  of the matrix in \req{eq:matrix-hypo}. Note that $\Lambda(0)=0$ and $\Lambda^\prime(0)=\lambda_q>0$, therefore $\Lambda(\epsilon)>0$ for $\epsilon$ sufficiently small.  Specifically, $\Lambda(\epsilon)$ is positive if  
\begin{align}\label{eq:Lambda_pos}
0<\epsilon < 4 \lambda_q \lambda_p /( 4 \lambda_q + R_0^2).
\end{align}

In the next section, we  show how the Poincar{\'e} inequality (\ref{eq:matrix-hypo}) for the modified inner product (\ref{mod_norm_def}) can be used to derive non-asymptotic confidence intervals and UQ bounds for hypocoercive systems, having in mind the four examples outlined above.

\section{Concentration inequalities and UQ bounds  via Feynman-Kac semigroups}\label{sec:concentration}  
In this section, we prove our main new results for hypocoercive systems:
\begin{enumerate}
\item A concentration inequality and corresponding non-asymptotic confidence intervals in Section \ref{sec:conc_ineq}.
\item UQ bounds in Section \ref{sec:UQ}.
\end{enumerate}
  The former are obtained by an adaptation of the technique from \cite{Wu} and \cite{GaoGuillinWu} to hypocoercive systems, which we first summarize.

\subsection{Background}\label{sec:Kac_background}

As in \cite{Wu,GaoGuillinWu}, we will prove Bernstein-type concentration inequalities.  The following related  elementary facts will be used repeatedly (see, e.g., the discussion of sub-gamma random variables in Chapter 2 in \cite{Boucheron:2016}):

Consider the convex function $\Psi_{v,b}$ given by 
\begin{align} 
\Psi_{v,b}(\lambda) & = \frac{\lambda^2 v}{2(1-\lambda b)} \quad \textrm { for } 0 \le \lambda < 1/b \,. \label{eq:psi} 
\end{align}
Its (one-sided) Legendre transform  $\Psi_{v,b}^*$ is 
\begin{align}
\Psi_{v,b}^*(r) & =\sup_{0\le \lambda < 1/b}\left\{\lambda r - \Psi_{v,b}(\lambda)\right\} 
= \frac{2 r^2}{v \left(1 + \sqrt{1 + \frac{2br}{v} }\right)^2}
\quad \textrm {for } r \ge 0  \label{eq:psi*} 
\end{align}
and the inverse of the Legendre transform  $\Psi_{v,b}^*$  is 
\begin{align}
(\Psi_{v,b}^*)^{-1}(\eta) & = \inf_{\lambda>0} \left\{ \frac{\Psi_{v,b}(\lambda)+ \eta}{\lambda} \right\} \,=\,  \sqrt{2 v \eta } + b \eta \quad \textrm {for } \eta \ge 0  \,. \label{eq:psi*-1}  
\end{align}

%
Now we summarize the method of \cite{Wu,GaoGuillinWu}:\\
Let $\mathcal{X}$ be a Polish space  and suppose we have  time homogeneous, $\mathcal{X}$-valued, c\`adl\`ag Markov processes  $
(\Omega,\mathcal{F},\mathcal{F}_t,X_t,P^x)$, $x\in\mathcal{X}$, with initial distributions $(X_0)_*P^x=\delta_x$ for all $x$.  For an initial measure $\mu$, write $P^\mu = \int P^x  d\mu(x)$.

We assume that $\mu_*$ is an invariant ergodic measure on $\mathcal{X}$ 
consider the real Hilbert space $L^2(\mu_*)$  with scalar product 
$\langle \cdot, \cdot \rangle$.
Define  the strongly continuous Markov semigroup 
$T_t:L^2(\mu_*)\to L^2(\mu_*)$  by 
\begin{align}
	T_t[f](x)=E^x[f(X_t)]
\end{align}
and denote its generator   by $(A,D(A))$.   

More generally, for a bounded measurable $V:\mathcal{X}\to\mathbb{R}$, 
define the Feynman-Kac semigroup 
$T_t^V:L^2(\mu_*)\to L^2(\mu_*)$ by
\begin{align}
T_t^V[f](x)=E^x\left[f(X_t)e^{\int_0^t V(X_s)ds}\right],
\end{align}
which is a strongly continuous semigroup with generator $(A+V,D(A))$.  
If  we set
\begin{align}\label{kappa_def}
\kappa(V)\equiv\sup\left\{\langle (A+V)g,g\rangle:g\in D(A), \|g\| =1\right\}
\end{align} 
then, by definition (and as long as $\kappa(V)<\infty$), for any $g \in D(A)$  we have 
\begin{align}
\langle (A + V - \kappa(V)) g \,,\, g \rangle \le 0
\end{align} 
and thus by the Lumer-Philipps theorem (see, e.g., Chapter IX in \cite{YosidaFA})  the semigroup generated by   $A + V - \kappa(V)$ is a contraction semigroup on $L^2(\mu_*)$.  This implies that
\begin{align} \label{eq:Kac_semigroup_bound}
\| T_t^V \| \le  e^{t \kappa(V)}\,,\,\,\,t\geq 0
\end{align}
(note that \req{eq:Kac_semigroup_bound} also trivially holds if $\kappa(V)=\infty$). Therefore by the Chernoff bound we have 
\begin{align}\label{L2_concentration_ineq}
P^\mu\left(\frac{1}{T}\int_0^T f(X_t)dt-\mu_*[f]>r\right)
& \le  \inf_{\lambda >0} e^{-\lambda T r} E^\mu\left[e^{\lambda\int_0^T \widehat{ f}(X_t) dt}\right]   \notag \\
 &\le  \inf_{\lambda >0} e^{-\lambda T r}  \int  T_T^{\lambda\widehat f}[1] d\mu  \notag \\
& \le  \inf_{\lambda>0} e^{-\lambda T r}  \left\| \frac{d\mu}{d\mu_*}\right\| \left\| T_T^{\lambda\widehat f}\right\| \notag \\
&\leq \left\|\frac{d\mu}{d\mu_*}\right\| e^{-T\sup_{\lambda>0}\{ \lambda r- \kappa(\lambda\widehat{f})\}} \,.
\end{align}
This basic insight, first noted by \cite{Wu}, can also be extended to unbounded $V$.  From here, one can obtain explicit concentration inequalities 
by  further bounding $\kappa(\lambda\widehat{f})$ (which contains the Dirichlet form  $\langle A g \,,g\rangle$)  using  $L^2(\mu_*)$-functional inequalities, such as a Poincar{\'e} inequality  (or $\log$-Sobolev inequalities, Lyapunov functions, and so on...);  see \cite{Wu,lezaud:hal-00940906,cattiaux_guillin_2008,Guillin2009,GaoGuillinWu} for many such examples.  

%
%
%

\subsection{Concentration inequalities}\label{sec:conc_ineq}

In the hypocoercive  examples considered in this paper, the generator is non-reversible and there is no Poincar\'e inequality with 
respect to the $L^2(\mu_*)$-scalar product  but, as discussed in Section 2, there is  a Poincar\'e inequality in terms of a modified scalar product that induces an equivalent norm.  In the following theorem, we show that one still obtains concentration inequalities in this more general setting.

\begin{theorem}\label{thm:conc-star}{\bf (Concentration inequalities).} Let $
(\Omega,\mathcal{F},\mathcal{F}_t,X_t,P^x)$, $x\in\mathcal{X}$, be  $\mathcal{X}$-valued c\`adl\`ag Markov processes  with invariant ergodic measure  $\mu_*$. 

Let $\langle \cdot,\cdot\rangle_{\#}$ be an inner product on $L^2(\mu_*)$ such that 
\begin{enumerate}
\item The induced norms $\|\cdot\|_\#$ and $\| \cdot\|$ are equivalent:  there exists  
 $0<c\leq C<\infty$ such that $c\|\cdot\| \leq \|\cdot\|_\#\leq C\| \cdot \|$. 
\item For all $g\in L^2(\mu_*)$,  we have  $\langle g,1\rangle_\#=\langle g,1\rangle$. 
\item A Poincar\'e inequality holds for $\langle\cdot,\cdot\rangle_\#$, i.e.,  we have $\alpha>0$ such that
\begin{align}
 \|g\|_\#^2\leq \alpha\langle -Ag,g\rangle_\# \,\,\, \text{  for all $g\in D(A)$ with $\mu_*[g]=0$.}
 \end{align}
 \end{enumerate}
For bounded measurable $f$, let $M_{\widehat{f}}$ denote the multiplication operator 
by $\widehat{f}=f - \mu_*[f]$.  We have the following concentration inequalities for $T>0$:
\begin{align}\label{concentration_ineq_mod}
&P^\mu\left(\pm \left[ \frac{1}{T}\int_0^T f(X_t)dt-\mu_*[f]\right]\geq r\right)\leq c^{-1}\left\|\frac{d\mu}{d\mu_*}\right\| e^{-T \Psi^*_{v_\pm, b_\pm}(r)} \,,
\end{align}
where $\Psi^*_{\nu, b}$ is given in \eqref{eq:psi*}, 
 \begin{align}\label{eq:v+b+}
 v_\pm= 2 \alpha  \left\| \frac{1}{2}(M_{\pm \widehat{ f}}+M_{\pm \widehat{f}}^\dagger)1\right\|_\#^2\,,\,\,  \quad  b_\pm = \alpha  \max\left\{0, \sup_{\|g\|_\#=1}\langle M_{\pm \widehat{f}}g,g\rangle_\# \right\}\,,
 \end{align}
 and $M_{\widehat{f}}^\dagger$ denotes the adjoint with respect to the $\langle\cdot,\cdot\rangle_\#$-inner product.
%
%
\end{theorem}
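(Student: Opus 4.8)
The plan is to run the Feynman--Kac / Lumer--Phillips argument of Section~\ref{sec:Kac_background}, but with the equivalent inner product $\langle\cdot,\cdot\rangle_\#$ in place of $\langle\cdot,\cdot\rangle$ throughout, converting back to the $L^2(\mu^*)$ norm only at the end (which is where the prefactor $c^{-1}$ enters). Since $\|\cdot\|_\#$ and $\|\cdot\|$ induce the same topology, $(L^2(\mu^*),\langle\cdot,\cdot\rangle_\#)$ is a Hilbert space on which $\mathcal{P}_t$ and, for bounded $V$, the Feynman--Kac semigroup $\mathcal{P}^V_t$ are still strongly continuous with unchanged generators $A$ and $A+M_V$; the same Lumer--Phillips argument that produced \eqref{eq:Kac_semigroup_bound} then gives
\begin{align}
\|\mathcal{P}^V_t\|_\#\le e^{t\kappa_\#(V)},\qquad \kappa_\#(V):=\sup\{\langle(A+M_V)g,g\rangle_\#:g\in D(A),\ \|g\|_\#=1\}. \notag
\end{align}
Inserting this into the Chernov bound \eqref{L2_concentration_ineq} with $V=\lambda\widehat f$, but estimating $\int\mathcal{P}^{\lambda\widehat f}_T(1)\,d\mu=\langle d\mu/d\mu^*,\mathcal{P}^{\lambda\widehat f}_T(1)\rangle\le\|d\mu/d\mu^*\|\,c^{-1}\|\mathcal{P}^{\lambda\widehat f}_T\|_\#\,\|1\|_\#$ and using $\|1\|_\#=1$ (property~2 at $g=1$), I arrive at
\begin{align}
P^\mu\Big(\tfrac1T\!\int_0^Tf(X_t)\,dt-\mu^*[f]>r\Big)\le c^{-1}\Big\|\tfrac{d\mu}{d\mu^*}\Big\|\,e^{-T\sup_{\lambda>0}\{\lambda r-\kappa_\#(\lambda\widehat f)\}}. \notag
\end{align}

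This reduces the theorem to the single estimate $\kappa_\#(\lambda\widehat f)\le\Psi_{v_+,b_+}(\lambda)$ for $0\le\lambda<1/b_+$, with $\Psi_{v,b}$ as in \eqref{eq:psi}: granting it, $\sup_{\lambda>0}\{\lambda r-\kappa_\#(\lambda\widehat f)\}\ge\sup_{0<\lambda<1/b_+}\{\lambda r-\Psi_{v_+,b_+}(\lambda)\}=\Psi^*_{v_+,b_+}(r)$ by \eqref{eq:psi*}, which is the ``$+$'' half of \eqref{concentration_ineq_mod}; I then get the ``$-$'' half by applying the ``$+$'' half to $-f$ (note $\widehat{(-f)}=-\widehat f$, and the event becomes the lower deviation).

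To prove the key estimate I fix $g\in D(A)$ with $\|g\|_\#=1$ and split $g=a\cdot1+g_0$, $a=\mu^*[g]$, $g_0=g-\mu^*[g]$. Property~2 makes this splitting $\langle\cdot,\cdot\rangle_\#$-orthogonal ($\langle g_0,1\rangle_\#=\langle g_0,1\rangle=0$), so $a^2+s^2=1$ with $s:=\|g_0\|_\#$. Since $A1=0$ and $\langle Ag_0,1\rangle_\#=\langle Ag_0,1\rangle=\int Ag_0\,d\mu^*=0$ by invariance of $\mu^*$, the Poincar\'e inequality (item~3) gives $\langle Ag,g\rangle_\#=\langle Ag_0,g_0\rangle_\#\le-\alpha^{-1}s^2$. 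Expanding $\langle M_{\widehat f}g,g\rangle_\#$ along the splitting, the pure-$a$ term is $a^2\langle\widehat f,1\rangle_\#=a^2\mu^*[\widehat f]=0$; the two mixed terms combine, via the $\langle\cdot,\cdot\rangle_\#$-adjoint $M_{\widehat f}^\dagger$, into $2a\langle\tfrac12(M_{\widehat f}+M_{\widehat f}^\dagger)1,g_0\rangle_\#\le 2|a|\,w\,s$ with $w:=\|\tfrac12(M_{\widehat f}+M_{\widehat f}^\dagger)1\|_\#$ (so $v_+=2\alpha w^2$); and $\langle M_{\widehat f}g_0,g_0\rangle_\#\le\alpha^{-1}b_+s^2$ from the definition of $b_+$ in \eqref{eq:v+b+}. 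With $|a|\le1$ this yields
\begin{align}
\langle(A+\lambda M_{\widehat f})g,g\rangle_\#\le-\alpha^{-1}(1-\lambda b_+)s^2+2\lambda w\,s, \notag
\end{align}
and for $0\le\lambda<1/b_+$ the coefficient of $s^2$ is $\le0$, so the right side is maximal over $s\in\mathbb{R}$ at value $\alpha\lambda^2w^2/(1-\lambda b_+)=\lambda^2v_+/\big(2(1-\lambda b_+)\big)=\Psi_{v_+,b_+}(\lambda)$, which is exactly the required bound on $\kappa_\#(\lambda\widehat f)$.

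The hard part will be the bookkeeping in this last step: unlike in the coercive case, $M_{\widehat f}$ is \emph{not} self-adjoint for $\langle\cdot,\cdot\rangle_\#$, so $M_{\widehat f}^\dagger$ must be tracked through the cross terms, and it is precisely property~2 that both makes the mean / mean-zero splitting $\langle\cdot,\cdot\rangle_\#$-orthogonal and kills the otherwise uncontrolled term $a^2\langle\widehat f,1\rangle_\#$ --- without this cancellation one would not land on the sub-gamma form $\Psi_{v,b}$. The remaining ingredients (strong continuity and generator identification on $(L^2(\mu^*),\langle\cdot,\cdot\rangle_\#)$, and the $L^2$-to-$\#$ operator-norm conversion that manufactures the $c^{-1}$) should be routine given the norm equivalence and $\|1\|_\#=1$.
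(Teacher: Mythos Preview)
Your proof is correct and follows essentially the same route as the paper: the Chernov bound combined with Lumer--Phillips in the $\langle\cdot,\cdot\rangle_\#$-geometry, followed by the quadratic-form estimate on $\kappa_\#(\lambda\widehat f)$. The only cosmetic difference is that the paper packages the last step as a separate abstract perturbation lemma (Lemma~\ref{perturb_lemma}) applied with $x_0=1$, $M=M_{\widehat f}$, whereas you have inlined that lemma's proof directly; the decomposition $g=a\cdot1+g_0$, the use of property~2 to make it $\#$-orthogonal and to kill $a^2\langle\widehat f,1\rangle_\#$, and the final one-variable maximization in $s$ are identical.
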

Note that $\nu_\pm$ and $b_\pm$ can be replaced by any upper bounds on these quantities, for example in terms  of the $L^2(\mu_*)$-norm (see the calculation for the hypocoercive examples in Section \ref{sec:app} below).

\begin{proof}  The proof is a modification of  the  strategy used in \cite{GaoGuillinWu}.  We start as in  \req{L2_concentration_ineq} but use the Lumer-Phillips theorem \ for the  $\| \cdot \|_\#$ norm instead since, by equivalence of the norms, $T^{\lambda \widehat{f}}_t$ is also a strongly continuous semigroup on $(L^2(\mu_*),\|\cdot\|_\#)$ with the same generator.  Using the Chernoff bound, the equivalence of the norm and the fact that, by Assumption 2, $\|1\|_\# = \langle 1, 1 \rangle_\# =\langle 1, 1 \rangle =1$, we obtain
\begin{align}\label{Markov_ineq}
P^\mu\left(\frac{1}{T}\int_0^T f(X_t)dt \geq \mu_*[f]+r\right)
  & \leq \inf_{\lambda >0} e^{ -\lambda Tr} E^\mu\left[ e^{\lambda \int_0^T \widehat{f}(X_t)dt}\right] \notag \\
 & = \inf_{\lambda >0} e^{ -\lambda Tr} \int T^{\lambda \widehat{f}}_T[1]\frac{d\mu}{d\mu_*}d\mu_*\notag\\
& \leq \inf_{\lambda > 0} e^{ -\lambda Tr} \left\|\frac{d\mu}{d\mu_*}\right\|  \left\| T^{\lambda \widehat{f}}_T[1]\right\| \notag\\
& \leq \inf_{\lambda >0} e^{ -\lambda Tr}  \left\|\frac{d\mu}{d\mu_*}\right\| c^{-1} \left\|T^{\lambda \widehat{f}}_T\right\|_\# \|1\|_\#\notag \\
&  \leq c^{-1} \left\|\frac{d\mu}{d\mu_*}\right\| e^{-T \sup_{\lambda> 0} ( \lambda r - \kappa_{\#}(\lambda \widehat{f})}\,, 
\end{align}
where,  by the Lumer-Phillips theorem applied to $L^2(\mu_*)$ with the scalar product $\langle \cdot, \cdot \rangle_\#$,  
\begin{align}\label{kappa_star_def}
&\kappa_{\#}( \lambda \widehat{f} )\equiv\sup\left\{\langle (A+ \lambda \widehat{f})g,g\rangle_{\#} : g\in D(A),\|g\|_\#=1\right\}.
\end{align}
%
%
%
%
%
%
Next we use  the following lemma proved in \cite{BRBMarkovUQ}, which is a generalization of a result in 
\cite{GaoGuillinWu}, which itself was a simplification of the argument originally used in \cite{lezaud:hal-00940906}.  For completeness, the proof is given in the appendix.  

\begin{lemma}\label{perturb_lemma}
Let $H$ be  a real Hilbert space, $A:D(A)\subset H\to H$ a linear operator, and $M:H\to H$ a bounded linear operator; denote its adjoint by $M^\dagger$.  
Assume there exists $\alpha>0$ and $x_0\in H$ with $\|x_0\|=1$ such that
\begin{align}
\langle Mx_0,x_0\rangle=0\,\,\,\text{ and }\,\,\, \langle Ax,x\rangle \leq - \alpha^{-1} \|P^\perp x\|^2
\end{align}
for all $x\in D(A)$, where $P^\perp$ is the orthogonal projector onto $x_0^\perp$.
Then    
\begin{align}
\sup_{x\in D(A),\|x\|=1}  \langle (A+\lambda M)x,x\rangle \leq\frac{\lambda^2 \alpha V}{1- \lambda \alpha K} = \Psi_{2\alpha V, \alpha K}(\lambda) 
\end{align}
for $0\le \lambda < 1/\alpha K$, where 
\begin{align}
V = \left\| \frac{1}{2}(M+M^\dagger)x_0\right\|^2 \,, \quad K=\max\left\{0, \sup_{\|y\|=1} \langle My,y\rangle\right\}\,.
\end{align}
\end{lemma}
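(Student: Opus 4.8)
The plan is to fix $x \in D(A)$ with $\|x\| = 1$ and decompose it along $x_0$. Writing $a = \langle x, x_0 \rangle \in \mathbb{R}$ and $w = P^\perp x$, we have $x = a x_0 + w$ with $w \perp x_0$, hence $a^2 + \|w\|^2 = 1$, and in particular $|a| \le 1$. The hypothesis on $A$ gives immediately $\langle A x, x \rangle \le -\alpha^{-1}\|w\|^2$, so the only thing left is to bound $\langle M x, x \rangle$ from above.

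Expanding bilinearly, $\langle M x, x \rangle = a^2 \langle M x_0, x_0 \rangle + a\langle M x_0, w \rangle + a\langle M w, x_0 \rangle + \langle M w, w \rangle$. The first term vanishes by the assumption $\langle M x_0, x_0 \rangle = 0$ — this is the step that removes the $O(1)$ diagonal contribution of $M$. The two middle terms combine, since $\langle M x_0, w \rangle + \langle M w, x_0 \rangle = \langle M x_0, w \rangle + \langle w, M^\dagger x_0 \rangle = \langle (M + M^\dagger) x_0, w \rangle$, so by Cauchy--Schwarz and $|a| \le 1$ their sum is at most $\|(M+M^\dagger)x_0\|\,\|w\| = 2\sqrt{V}\,\|w\|$. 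The last term is at most $K\|w\|^2$ by the very definition of $K$ (the $\max\{0,\cdot\}$ takes care of the case $w = 0$ and of any sign). Collecting everything, for every $\lambda \ge 0$,
\[
\langle (A + \lambda M) x, x \rangle \le -(\alpha^{-1} - \lambda K)\|w\|^2 + 2\lambda\sqrt{V}\,\|w\|.
\]

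Finally, for $0 \le \lambda < 1/(\alpha K)$ the coefficient $\alpha^{-1} - \lambda K$ is strictly positive, so the right-hand side, viewed as a function of $t = \|w\| \in [0,1]$, is a concave downward quadratic in $t$; maximizing it over all $t \ge 0$ (which only enlarges the estimate, and is harmless since we only want an upper bound) gives the value $\lambda^2 V/(\alpha^{-1} - \lambda K) = \lambda^2\alpha V/(1 - \lambda\alpha K) = \Psi_{2\alpha V,\alpha K}(\lambda)$, attained at $t = \lambda\sqrt{V}/(\alpha^{-1} - \lambda K)$. Taking the supremum over $x \in D(A)$ with $\|x\| = 1$ yields the claim. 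There is no genuine obstacle in this lemma — the argument is elementary — but the point that needs a little care is the cross-term bookkeeping: one must use $\langle M x_0, x_0\rangle = 0$ to kill the diagonal term and then be content with the crude bound $|a|\le 1$ rather than tracking $a = \langle x, x_0\rangle$ exactly, since that already produces the stated (sharp-looking) constant.
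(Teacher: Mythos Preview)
Your proof is correct and follows essentially the same approach as the paper's: decompose $x = a x_0 + w$ with $w = P^\perp x$, kill the diagonal term via $\langle M x_0, x_0\rangle = 0$, bound the cross term by Cauchy--Schwarz together with $|a|\le 1$, bound $\langle Mw,w\rangle$ by $K\|w\|^2$, and then maximize the resulting quadratic in $\|w\|$. The only cosmetic difference is that the paper normalizes $v = w/\|w\|$ and writes $\|w\| = \sqrt{1-|a|^2}$ before optimizing over $r = \sqrt{1-|a|^2}\ge 0$, whereas you keep $w$ un-normalized and optimize over $t = \|w\|$ directly.
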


To use this result we take $H=(L^2(\mu_*),\langle\cdot,\cdot\rangle_\#)$ and let $A$  be the generator,  $M=M_{\widehat{f}}$,  and $x_0=1$. By Assumption 2, we have 
$\langle Mx_0\,,\,x_0\rangle_\#=\langle \widehat{f}\,, 1 \rangle_\# =\langle \widehat{f}\,, 1 \rangle =0$.  This assumption also implies that 
 the projection onto $1^\perp$ (for both  scalar products) is given by $P^\perp f = \widehat{f}$ and 
 \begin{align}
 \langle Ag,1\rangle_{\#}=\langle Ag,1\rangle=0,\,\,\, g\in D(A).
 \end{align}
  Combined with Assumption 3 and the fact that $A[1]=0$  we get 
\begin{align}
\langle Ag,g\rangle_\#=&\langle A\widehat{g},\widehat{g}\rangle_\# \le -\alpha^{-1}\|\widehat{g}\|^2_\#=-\alpha^{-1}\|P^\perp g\|^2_\#,\notag
\end{align}
and thus we can apply Lemma \ref{perturb_lemma} to obtain 
\begin{align} \label{eq:bernstein1}
\kappa_\#(\lambda\widehat{f})=\sup_{g\in D(A),\|g\|_\#=1}\langle (A+\lambda \widehat{f})g,g\rangle_\# \leq \Psi_{v_+,b_+}(\lambda) 
\end{align}
for all $0\leq \lambda< 1/b_+$,  where
\begin{align}
v_+=2 \alpha \left\| \frac{1}{2}(M_{\widehat{f}}+M_{\widehat{f}}^\dagger)1\right\|_\#^2\,,\,\,\,b_+ = \alpha  \max\left\{0, \sup_{\|g\|_\#=1}\langle M_{\widehat{f}}g,g\rangle_\# \right\}
\end{align}
 (as was given in \eqref{eq:v+b+}).  Therefore 
\begin{align} 
P^\mu\left(\frac{1}{T}\int_0^T f(X_t)dt \geq \mu_*[f]+r\right) & \le c^{-1} \left\|\frac{d\mu}{d\mu_*}\right\| e^{ - T \sup_{0\leq\lambda<1/b_+}\{ \lambda r - \Psi_{v_+,b_+}(\lambda)\}} \\
& = c^{-1} \left\|\frac{d\mu}{d\mu_*}\right\| e^{ - T \Psi^*_{v_+,b_+}(r)}  \,.\notag
\end{align}
The lower bound is obtained by replacing $f$ by $-f$ and this concludes the proof. 
%
%
%
%
%
%
\end{proof}

As an immediate corollary we obtain a  non-asymptotic confidence interval. 

\begin{corollary}\label{cor:confidence-interval}{\bf (Confidence intervals).}
Under the same assumptions as in Theorem \ref{thm:conc-star},
given a time $T$ and a confidence level $0 < 1-\delta < 1$ we have 
\begin{align}\label{eq:conf_interval}
P^\mu\left(\frac{1}{T}\int_0^Tf(X_t)dt-\mu_*[f]\in(-r_-,r_+)\right)\geq 1-\delta
\end{align}
where    
\begin{align}\label{eq:rpm}
r_\pm =\sqrt{2 v_\pm \frac{1}{T}\log\left(\frac{2N}{\delta}\right)} + b_\pm 
\frac{1}{T}\log\left(\frac{2N}{\delta}\right)\,,
\end{align}
%
%
with $N = c^{-1}\left\|\frac{d\mu}{d\mu_*}\right\|$ and $v_\pm$ and $b_\pm$ given in \eqref{eq:v+b+}.
\end{corollary}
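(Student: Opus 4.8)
The plan is to invert the two one-sided concentration bounds supplied by Theorem~\ref{thm:conc-star} and glue them together with a union bound; no auxiliary machinery is needed, so the corollary really is ``immediate.'' Fix $T>0$ and $\delta\in(0,1)$ and abbreviate $\eta\equiv\frac{1}{T}\log\!\left(\frac{2N}{\delta}\right)\geq 0$ with $N=c^{-1}\left\|\frac{d\mu}{d\mu^*}\right\|$. First I would choose $r_+$ so that the right-hand side of the ``$+$'' case of \eqref{concentration_ineq_mod} equals $\delta/2$, i.e. so that $N e^{-T\Psi^*_{v_+,b_+}(r_+)}=\delta/2$; taking logarithms, this is precisely the equation $\Psi^*_{v_+,b_+}(r_+)=\eta$. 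By the explicit formula \eqref{eq:psi*-1} for the inverse of the one-sided Legendre transform, its nonnegative solution is $r_+=(\Psi^*_{v_+,b_+})^{-1}(\eta)=\sqrt{2v_+\eta}+b_+\eta$, which is exactly the expression in \eqref{eq:rpm}. Running the same computation with $f$ replaced by $-f$ (equivalently, using the ``$-$'' case of \eqref{concentration_ineq_mod}) gives $r_-=\sqrt{2v_-\eta}+b_-\eta$.

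With these choices of $r_\pm$, Theorem~\ref{thm:conc-star} yields
\[
P^\mu\!\left(\frac{1}{T}\int_0^T f(X_t)\,dt-\mu^*[f]\geq r_+\right)\leq \frac{\delta}{2},\qquad
P^\mu\!\left(\frac{1}{T}\int_0^T f(X_t)\,dt-\mu^*[f]\leq -r_-\right)\leq \frac{\delta}{2}.
\]
The complement of the event in \eqref{eq:conf_interval} is contained in the union of these two events, so the union bound gives that it has probability at most $\delta$, which is \eqref{eq:conf_interval}.

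The only point that needs a word of justification — and the closest thing to an obstacle, though it is minor — is that $r_\pm$ as written genuinely solves $\Psi^*_{v_\pm,b_\pm}(r_\pm)=\eta$. This follows because, from the closed form in \eqref{eq:psi*}, the map $r\mapsto\Psi^*_{v,b}(r)$ is continuous and strictly increasing on $[0,\infty)$ with $\Psi^*_{v,b}(0)=0$ and $\Psi^*_{v,b}(r)\to\infty$ as $r\to\infty$, hence a bijection of $[0,\infty)$ onto itself with inverse given by \eqref{eq:psi*-1}; the degenerate case $v_\pm=b_\pm=0$ (i.e. $\widehat f\equiv 0$) is trivial and may be set aside. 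Everything else is a direct rearrangement of Theorem~\ref{thm:conc-star}.
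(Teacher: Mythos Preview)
Your proof is correct and follows essentially the same route as the paper's own argument: set $\eta=\frac{1}{T}\log(2N/\delta)$, define $r_\pm=(\Psi^*_{v_\pm,b_\pm})^{-1}(\eta)=\sqrt{2v_\pm\eta}+b_\pm\eta$, apply Theorem~\ref{thm:conc-star} to bound each tail by $\delta/2$, and conclude by a union bound. The only small point the paper makes explicit and you merely assert is that $\eta\geq 0$, equivalently $N\geq 1$; this follows from Assumptions~1 and~2 of Theorem~\ref{thm:conc-star}, since $\|1\|_\#=\langle 1,1\rangle_\#=\langle 1,1\rangle=1$ forces $c\leq 1$, and $\|d\mu/d\mu^*\|\geq \langle d\mu/d\mu^*,1\rangle=1$.
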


\begin{proof} Define  $\eta= \frac{1}{T} \log\left( \frac{2N}{\delta}\right)$ (note that $N\geq 1$ follows from Assumptions 1 and 2 of Theorem \ref{thm:conc-star}), so that $r_\pm = (\Psi^*_{v_\pm,b_\pm})^{-1}(\eta)$, with $r_\pm$  given as in \eqref{eq:rpm}.  Using $r=r_\pm$ in the concentration bound in Theorem \ref{thm:conc-star} we  find 
\begin{align} 
P^\mu\left(\pm\left[\frac{1}{T}\int_0^Tf(X_t)dt-\mu_*[f]\right] \geq r_\pm\right) \le  N e^{- T \eta}=\frac{\delta}{2}.
\end{align}
The result (\ref{eq:conf_interval}) then follows from a union bound.
\end{proof}

%
%
 
\subsection{Robustness bounds on steady state bias due to model-form uncertainty}\label{sec:UQ}

Following on the  methods in \cite{GKRW,BRBMarkovUQ}, we can also use the above tools  to obtain bounds on the bias of the expectation of  
ergodic averages when the process itself is subject to (model-form) uncertainty.

We think of  the Markov process $(X_t,P^\mu)$ considered in Section 
\ref{sec:Kac_background} as the baseline process and consider  
an alternative stochastic process $(\widetilde X_t,\widetilde{P}^{\widetilde{\mu}})$ with initial distribution $(X_0)_*\widetilde{P}^{\widetilde{\mu}}=\widetilde{\mu}$ and let $\widetilde 
E^{\widetilde\mu}$ be the associated expectation.  
\begin{remark}
The requirements on the alternative process are very minimal.  In particular, we are {\em not} assuming $(\widetilde{X}_t,\widetilde P^{\widetilde\mu})$ is a Markov processes.
\end{remark}
 We will compare the two processes using relative entropy; we  assume absolute continuity of the path-space distributions on  finite time windows $[0,T]$, i.e., $\widetilde{P}^{\widetilde{\mu}}_T\ll P^\mu_T$, and also assume the relative entropy is finite:
 \begin{align}\label{eq:rel_ent_T}
R\big(\widetilde{P}^{\widetilde{\mu}}_T\|P^{\mu}_T\big) < \infty \,.
\end{align}
See the supplementary material to \cite{DKPP} for a collection of techniques that can be used to bound  the path-space relative entropy (\ref{eq:rel_ent_T}) for various classes of alternative models.

Given an observable $f$ we consider the ergodic averages 
 \begin{align}\label{eq:F_T}
 \widetilde{F}_T= \frac{1}{T}\int_0^T f(\widetilde{X}_t) dt \,,\,\,\,\, F_T= \frac{1}{T}\int_0^T f(X_t) dt\,,
\end{align}
and are interested in bounding the bias 
between the baseline and the alternative processes:
\begin{align}
\widetilde{E}^{\widetilde{\mu}}[\widetilde{F}_T] - E^{\mu}[F_T] \,.
\end{align}

%

\begin{theorem}\label{thm:UQ_mod_poincare}{\bf(Uncertainty Quantification bounds).}
Let $(X_t,P^x)$, $x\in\mathcal{X}$, be a family of Markov process satisfying the assumptions of Theorem \ref{thm:conc-star}, $\mu$ be an initial distribution, and  $(X_t,\widetilde{P}^{\widetilde{\mu}})$ be an alternative process with 
$R(\widetilde{P}^{\widetilde{\mu}}_T\|P^{\mu}_T) < \infty$.  Then for any bounded measurable $f$ we have
\begin{align}\label{ModPoincare_UQ_bound}
  \pm \left( \widetilde E^{\widetilde\mu} [ \widetilde{F}_T] - E^{\mu} [F_T ]\right)  \leq &  \sqrt{ 2 v_\pm \eta_T}+  b_\pm \eta_T   + \frac{C}{c}\frac{1-e^{-\alpha T}}{T} \left\|\frac{d \mu}{d\mu_*}\right\| 
{\rm Var}_{\mu_*}[f]\,, \notag
\end{align}
where $v_\pm$ and $b_\pm$ are given in \eqref{eq:v+b+} and 
\begin{align}
\eta_T =\frac{1}{T} \left(\log(c^{-1})+ \log \left\|\frac{d \mu}{d\mu_*}\right \| +  R\big(\widetilde P^{\widetilde{\mu}}_T\|P^{\mu}_T\big) \right)\,.\notag
\end{align}
If, in addition, the process $(\widetilde{X}_t,\widetilde{P}^{\widetilde{\mu}})$ is ergodic with invariant measure $\widetilde{\mu}_*$,  the limit
\begin{align}
\eta_\infty =  \lim_{T\to \infty} \frac{1}{T} R\big(\widetilde{P}^{\widetilde{\mu}}_T\|P^{\mu_*}_T\big)
\end{align}
exists for the relative entropy rate, and $\|d\mu/d\mu_*\|<\infty$, then we have the steady-state bias bound 
\begin{align}\label{ModPoincare_UQ_bound_infinity}
    \pm \left( {\widetilde \mu}_* [f]  -  \mu_* [f] \right)    \leq \sqrt{ 2 v_\pm  \eta_\infty}+ b_\pm \eta_\infty\,.
\end{align}
\end{theorem}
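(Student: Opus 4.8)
The plan is to combine the variational (Gibbs/Donsker--Varadhan) characterization of path-space relative entropy with the Feynman--Kac semigroup bounds already established in the proof of Theorem~\ref{thm:conc-star}, and to control the finite-time bias of the baseline ergodic average separately via the Poincar\'e inequality for $\langle\cdot,\cdot\rangle_\#$. First I would split
\[
\widetilde E^{\widetilde\mu}[\widetilde F_T]-E^{\mu}[F_T]=\big(\widetilde E^{\widetilde\mu}[\widetilde F_T]-\mu^*[f]\big)+\big(\mu^*[f]-E^{\mu}[F_T]\big)
\]
and bound the two pieces in turn; the $\pm$ statement then follows by also running the argument with $f$ replaced by $-f$ (which exchanges $(v_+,b_+)$ and $(v_-,b_-)$).

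For the first piece, I would fix $0\le\lambda<1/b_+$ and apply the variational inequality $E_Q[g]\le R(Q\|P)+\log E_P[e^{g}]$ with $P=P^{\mu}_T$, $Q=\widetilde P^{\widetilde\mu}_T$, and the bounded path functional $g=\lambda\int_0^T\widehat f(\omega_t)\,dt$. This gives $\lambda T\big(\widetilde E^{\widetilde\mu}[\widetilde F_T]-\mu^*[f]\big)\le R\big(\widetilde P^{\widetilde\mu}_T\|P^{\mu}_T\big)+\log E^{\mu}[e^{\lambda\int_0^T\widehat f(X_t)\,dt}]$, and the log-moment term is exactly the one bounded in the chain \eqref{Markov_ineq}--\eqref{eq:bernstein1}: $E^{\mu}[e^{\lambda\int_0^T\widehat f(X_t)dt}]=\int\mathcal P^{\lambda\widehat f}_T(1)\,\tfrac{d\mu}{d\mu^*}\,d\mu^*\le c^{-1}\|\tfrac{d\mu}{d\mu^*}\|\,e^{T\kappa_\#(\lambda\widehat f)}$ with $\kappa_\#(\lambda\widehat f)\le\Psi_{v_+,b_+}(\lambda)$. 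Dividing by $\lambda T$ yields $\widetilde E^{\widetilde\mu}[\widetilde F_T]-\mu^*[f]\le(\eta_T+\Psi_{v_+,b_+}(\lambda))/\lambda$ with $\eta_T$ as in the statement, and taking the infimum over $\lambda$ and invoking the inverse-Legendre-transform identity \eqref{eq:psi*-1} produces the term $\sqrt{2v_+\eta_T}+b_+\eta_T$.

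For the second piece I would estimate the baseline bias directly: $\mu^*[f]-E^{\mu}[F_T]=-\tfrac1T\int_0^T\langle\mathcal P_t\widehat f,\tfrac{d\mu}{d\mu^*}\rangle\,dt$, so Cauchy--Schwarz gives $|\mu^*[f]-E^{\mu}[F_T]|\le\|\tfrac{d\mu}{d\mu^*}\|\,\tfrac1T\int_0^T\|\mathcal P_t\widehat f\|\,dt$. Since $\mu^*$ is invariant, $\mathcal P_t\widehat f$ is mean-zero, so differentiating $t\mapsto\|\mathcal P_t\widehat f\|_\#^2$ and using Assumption~3 of Theorem~\ref{thm:conc-star} gives the exponential decay $\|\mathcal P_t\widehat f\|_\#\le e^{-t/\alpha}\|\widehat f\|_\#$; norm equivalence then gives $\|\mathcal P_t\widehat f\|\le(C/c)e^{-t/\alpha}\|\widehat f\|$ with $\|\widehat f\|^2=\Var_{\mu^*}[f]$, and integrating in $t$ produces the $O(1/T)$ correction term of \eqref{ModPoincare_UQ_bound}. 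Finally, for \eqref{ModPoincare_UQ_bound_infinity} I would let $T\to\infty$: the $O(1/T)$ term of the previous step vanishes because $\|d\mu/d\mu^*\|<\infty$; ergodicity of the baseline and of $\widetilde X_t$ together with boundedness of $f$ give $E^{\mu}[F_T]\to\mu^*[f]$ and $\widetilde E^{\widetilde\mu}[\widetilde F_T]\to\widetilde\mu^*[f]$; the $\log(c^{-1})$ and $\log\|d\mu/d\mu^*\|$ contributions to $\eta_T$ are negligible after division by $T$; and $\tfrac1T R(\widetilde P^{\widetilde\mu}_T\|P^{\mu}_T)$ has the same limit $\eta_\infty$ as $\tfrac1T R(\widetilde P^{\widetilde\mu}_T\|P^{\mu^*}_T)$ since, by the chain rule for relative entropy, the two differ only by the $T$-independent constant $R(\widetilde\mu\|\mu)-R(\widetilde\mu\|\mu^*)$. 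Continuity of $\eta\mapsto\sqrt{2v_\pm\eta}+b_\pm\eta$ then gives the claim.

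The step I expect to require the most care is the first piece: one must fuse the variational inequality with the semigroup bound of Theorem~\ref{thm:conc-star} while keeping the range $0<\lambda<1/b_+$ consistent, so that the minimization over $\lambda$ reproduces exactly the inverse Legendre transform \eqref{eq:psi*-1}. The baseline-bias estimate and the limit $T\to\infty$ are comparatively routine, the latter relying only on the standard fact that path-space relative entropy rates are insensitive to a change of (finite-relative-entropy) initial distribution.
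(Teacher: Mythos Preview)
Your proposal is correct and follows essentially the same route as the paper. The only cosmetic difference is in the order of operations: the paper first applies the Gibbs variational inequality to $\widetilde E^{\widetilde\mu}[\widetilde F_T]-E^{\mu}[F_T]$ (with the cumulant generating function centered at $E^\mu[F_T]$) and then shifts the centering to $\mu^*[f]$, which produces the additive term (II); you split off $\mu^*[f]-E^\mu[F_T]$ first and then apply Donsker--Varadhan directly with $\widehat f$. The two computations are algebraically identical, and your treatment of the baseline bias (II) via the exponential decay $\|\mathcal P_t\widehat f\|_\#\le e^{-t/\alpha}\|\widehat f\|_\#$ plus norm equivalence matches the paper's \eqref{eq:UQB-II} exactly. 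For the limit \eqref{ModPoincare_UQ_bound_infinity} you are actually more explicit than the paper, which simply takes $T\to\infty$; note that the simplest way to obtain it is to apply the finite-$T$ bound with $\mu=\mu^*$, so that $\|d\mu/d\mu^*\|=1$ and $\eta_T=\tfrac1T(\log(c^{-1})+R(\widetilde P^{\widetilde\mu}_T\|P^{\mu^*}_T))$ converges directly to $\eta_\infty$, avoiding the chain-rule detour.
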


\begin{proof} The proof proceeds along the same line as in \cite{BRBMarkovUQ} to which we refer for more details.   
The starting point is the Gibbs information inequality 
\cite{chowdhary_dupuis_2013,DKPP}:  for $g$ bounded and measurable and  
probability measures  $Q$ and $\widetilde{Q}$
\begin{align}\label{goal_oriented_bound}
 \pm\left(  E_{\widetilde Q}E[g]-E_Q[g]\right) \leq  
\inf_{\lambda >0}  \left\{   \frac{\log E_Q[e^{ \pm \lambda 
(g - E_Q[g])}]  +R(\widetilde{Q} \|Q)}{\lambda} \right\} \,.
\end{align} 
This is a direct consequence of the Gibbs variational principle for the relative entropy, \cite{dupuis2011weak}. 

We apply the bound to the measures $P^{\mu}_T$,  $\widetilde P^\mu_T$ (distributions on path-space up to time $T$) and  $g(x)=\int_0^T f(x_t) dt$ (a bounded measurable function of paths, $x$, up to time $T$) and then divide both sides by $T$: 
\begin{align}
& \pm \left( {\widetilde E}^{\widetilde\mu} [ \widetilde{F}_T] -  E^{\mu} [F_T ] \right) 
\notag \\ 
&\,\,\, \leq \inf_{\lambda >0}  
     \left\{ \frac{ \log E^{\mu}[ e^{\pm \lambda T(F_T-E^\mu[F_T]}]  +R(\widetilde P^{\widetilde{\mu}}_T\|P^{\mu}_T) }{\lambda T} 
      \right\} 
\notag \\
&\,\,\, \leq 
      \underbrace{\inf_{\lambda >0}\left\{ 
      \frac{ \log E^{\mu}[ e^{\pm \lambda T(F_T-\mu_*[f]}]  +R(\widetilde P^{\widetilde{\mu}}_T\|P^{\mu}_T) }{\lambda T} \right\} }_{=\textrm{(I)}}
\mp \underbrace{ \left(E^\mu[F_T] - \mu_*[f]\right)}_{=\textrm{(II)}}\,.
  \end{align}    
The term (II) only involves the baseline process and is easily bounded, for example using the  Poincar\'e inequality for the scalar product $\langle\cdot,\cdot\rangle_\epsilon$:
\begin{align}\label{eq:UQB-II}
|\textrm{(II)}|= \left| E^{\mu}\left[ \frac{1}{T}\int_0^T \widehat{f}(X_t) \, dt\right]  \right| 
& \le  \frac{1}{T}\int_0^T \left|\left\langle \frac{d \mu}{d\mu_*},T_t[\widehat{f}]\right\rangle \right|dt  \\ 
& \le \frac{1}{T}\int_0^T e^{-t/\alpha}  \left\|\frac{d \mu}{d\mu_*}\right\|  \frac{C}{c}  \|\widehat{f}\|dt \notag \\
& =\frac{C}{c}\frac{1-e^{-T/\alpha}}{T/\alpha} \left\|\frac{d \mu}{d\mu_*}\right\| 
\sqrt{{\rm Var}_{\mu_*}[f]}\,. \notag
\end{align}
To bound the term (I),  we use Lemma \ref{perturb_lemma} to bound the moment generating function, similarly to the proof of Theorem \ref{thm:conc-star}:
\begin{align}\label{eq:UQB-I}
\textrm{(I)} &   = \inf_{\lambda >0} \left\{   
       \frac{\log \int T^{\pm \lambda \widehat{f}}_T[1]d\mu  +R\big(\widetilde P^{\widetilde{\mu}}_T\|P^{\mu}_T\big) }{\lambda T}
        \right\} \\
       &   \le  \inf_{\lambda >0} \left\{   
       \frac{\log  \left( c^{-1}\left\| \frac{d\mu}{d\mu_*}\right\|  e^{ T \kappa_\#(\pm \lambda \widehat{f})}  \right)       +R\big(\widetilde P^{\widetilde{\mu}}_T\|P^{\mu}_T\big) }{\lambda T}
        \right\} \notag \\
       &   =  \inf_{\lambda >0} \left\{ \frac{\kappa_\#(\pm \lambda \widehat{f}) + \eta_T}{\lambda} \right\} \notag \\
         &  \leq   \inf_{\lambda >0} 
       \left\{    \frac{ \Psi_{v_\pm,b_\pm}(\lambda) + \eta_T}{\lambda} \right\} \notag \\
       &= (\Psi_{v_\pm,b_\pm}^*)^{-1}(\eta_T) = \sqrt{2 v_\pm \eta_T} + b_\pm \eta_T \,. \notag
\end{align}
Finally, by taking  $T \to \infty$ we obtain  the  bounds in \req{ModPoincare_UQ_bound_infinity}
\end{proof}

\subsection{Application to hypocoercive samplers}\label{sec:app}

Theorems \ref{thm:hypo-conf} and \ref{thm:hypo-uq} 
for hypocoercive MCMC samplers follow rather immediately from Corollary \ref{cor:confidence-interval} and from Theorem \ref{thm:UQ_mod_poincare}.
We first verify the three assumptions in Theorem \ref{thm:conc-star}. 
The modified scalar product \eqref{mod_norm_def} has the form $\langle f , g \rangle_\epsilon = 
\langle f , g \rangle + \epsilon \langle f , G g \rangle$ 
where $G1=0$ and $\|G\|\le 1$. Therefore we  
we have  $c=(1-\epsilon)^{1/2}$, $C=(1+\epsilon)^{1/2}$, and 
$\langle f \,,\, 1 \rangle_\epsilon =  \langle f \,,\, 1 \rangle $,
and, for $\epsilon\in(0,1)$ sufficiently small (see \req{eq:Lambda_pos}), by \req{eq:matrix-hypo} we have $ \alpha=\frac{1+\epsilon}{\Lambda(\epsilon)}$.  

Since $\langle M_{\widehat{f}}g,g\rangle_\epsilon \le \|\widehat{f}\|_\infty \|g\|^2 (1+ \epsilon) \le \frac{1+\epsilon}{1-\epsilon} \|\widehat{f}\|_\infty \|g\|^2_\epsilon $ 
we have 
\begin{align}
b_\pm = \alpha \max\left\{0, \sup_{\|g\|_\epsilon=1}\langle M_{\pm\widehat{f}}g,g\rangle_\epsilon\right\}  \le \frac{(1+\epsilon)^2}{1-\epsilon}\frac{\|\widehat{f}\|_\infty}{\Lambda(\epsilon)}\,.
\end{align}    
Furthermore, using self-adjointness of $G$, we have 
\begin{align}
M_{\widehat{f}}^\dagger=& (I+ \epsilon G)^{-1}M_{\widehat{f}}(I+ \epsilon G) 
= M_{\widehat{f}} + \epsilon (I+ \epsilon G)^{-1} (M_{\widehat{f}}G -GM_{\widehat{f}}),
\end{align}
and thus, since $G1=0$,
\begin{align}
\frac{1}{2}(M_{\widehat{f}}+M_{\widehat{f}}^\dagger)1=\widehat{f}-\frac{\epsilon}{2} (I+\epsilon G )^{-1} G\widehat{f}\,. \notag
\end{align}
Therefore
\begin{align}
\left\|\frac{1}{2}(M_{\widehat{f}}+M_{\widehat{f}}^\dagger)1\right\|_\epsilon^2
&= 
\left\langle (I-\frac{\epsilon}{2} (I+\epsilon G )^{-1} G)\widehat{f} \,,\, (I+ \epsilon G)  (I-\frac{\epsilon}{2} (I+\epsilon G )^{-1} G) \widehat{f}\right\rangle \notag \\
&=\left\langle (I-\frac{\epsilon}{2} (I+\epsilon G )^{-1} G)\widehat{f} \,,\, 
(I+ \frac{\epsilon}{2} G) \widehat{f} \right\rangle \notag \\
&\le 
\left(1 + \frac{\epsilon}{2}\frac{1}{1 - \epsilon}\right)\left(1 + \frac{\epsilon}{2}\right)  
\|\widehat{f}\|^2 = \frac{1 - \frac{\epsilon^2}{4}}{1-\epsilon} {\rm Var}_{\mu_*}[f]\,,
\end{align}
and so 
\begin{align}
v_\pm \le \frac{ (1+\epsilon) (1 - \frac{\epsilon^2}{4})}{1-\epsilon} 
\frac{2 \Var_{\mu_*}[f]}{\Lambda(\epsilon)}\,.
\end{align}

\appendix  

\section{Additional Proofs}\label{app:additional_proofs}

For the sake of completeness and the convenience of the reader, in this appendix we provide   proofs of two results used above that have previously appeared elsewhere in the literature.

First we derive several   functional analytic estimates that form an important part of the hypocoercivity method of \cite{Dolbeault2009,Dolbeault2015}.
\begin{proof}[Proof of Proposition \ref{thm:elem}]
The first property follows from $A 1=A^*1=0$ and $\Pi1=1$.

For (2),  it easy to verify that  $S\Pi =0$ and taking adjoint gives $\Pi S=0$.   

For (3), note that  $T \Pi f = v \nabla_q \Pi f$ and thus 
$ \Pi T \Pi f =  \Pi (v  \nabla_q \Pi f) =  (\nabla_q \Pi f ) \Pi v =0$ (since the velocity $v$ has mean zero).  

For (4), note that  by (3) we have $\Pi T \Pi =0$ and thus $B \Pi  =0$.  On the other hand, by definition of $B$ we have the identity 
\begin{align}\label{eq:id01}
Bf +  (T \Pi)^* (T \Pi) Bf =  \Pi T f \,,
\end{align}
and thus $\Pi B = B$. 

Taking the scalar product of \req{eq:id01} with $Bf$ and using $\Pi B=B$ and $ T \Pi = (I - \Pi) T \Pi$ we obtain 
\begin{align}
  \langle Bf\,,\, Bf\rangle + \langle T Bf, TBf \rangle & = \langle - T B f  \,,\, (I - \Pi) f \rangle  \\
  & \le \| ( I- \Pi)f \| \| TB f\| \notag\\
  & \le \frac{1}{4}\| ( I- \Pi)f \|^2 +    \| TB f\|^2 \,.\notag
\end{align}
The last inequality gives $\|Bf \| \le \frac{1}{2} \| ( I- \Pi)f \|$ while the first inequality gives $\|TB f\| \le \| ( I- \Pi)f \|$.
\end{proof}

We end with a derivation of bounds on perturbations to the generator (previously obtained in \cite{BRBMarkovUQ}) that play a key role in proving our new results in Theorems \ref{thm:conc-star} and \ref{thm:UQ_mod_poincare}.
\begin{proof}[Proof of Lemma \ref{perturb_lemma}]
Let $x\in D(A)$ with $\|x\|=1$.  Define $a =\langle x_0,x\rangle$ so that    
$\|P^\perp x\|^2=1-|a |^2$ and $|a |\leq 1$ with equality if and only if $P^\perp x=0$. We can decompose $x=a  x_0+\sqrt{1-|a |^2}v$, where: (a) $P^\perp x=0$, $|a|=1$, and $v=0$ or (b) $P^\perp x\neq 0$, $v=P^\perp x/\sqrt{1-|a|^2}$, and $\|v\|=1$.   In either case, $v\perp x_0$. 

Using 
$\langle Mx_0,x_0\rangle =0$ and $\langle A x , x\rangle \le -  \alpha^{-1} 
\|P^\perp x\|$ 
one obtains
\begin{align}
& \langle (A+ \lambda M)x,x\rangle \notag \\ 
& \leq   - \alpha^{-1} (1-|a |^2) +2\lambda a \sqrt{1-|a |^2}  \langle v, \frac{1}{2} (M + M^\dagger) x_0\rangle +\lambda (1-|a|^2) \langle Mv,v\rangle\notag\\
 & \leq   2\lambda |a| \sqrt{1-|a|^2} V^{1/2} -(1-|a |^2) \left( \alpha^{-1}-\lambda K\right)\,, \notag
\end{align}	
where $V=\left\| \frac{1}{2} (M + M^\dagger) x_0\right  \|^2$ and $K = \max\left\{0, \sup_{\|v\|=1}\langle Mv,v\rangle\right\}$.  
Restricting to $0\leq \lambda  <1/\alpha K$ and using $|a|\le 1$  we can estimate 
\begin{align}
\sup_{x\in D(A),\|x\|=1} \langle (A+\lambda M)x,x\rangle \leq&\sup_{r\geq 0}\left(2\lambda V^{1/2} r-\left(\alpha^{-1}-\lambda K \right)r^2\right)=\frac{\lambda^2 \alpha V }{1-\lambda \alpha K}\,. \notag 
\end{align}
\end{proof}

\subsection*{Acknowledgments}
Research supported in part by the National Science Foundation (DMS-1515712, DMS-2008970) and the Air Force Office of Scientific Research (AFOSR) (FA-9550-18-1-0214). Luc Rey-Bellet thanks Gabriel Stoltz and Stefano Olla for useful discussions and suggestions.

\bibliographystyle{amsplain}
\bibliography{conc_UQ_hypocoercive.bbl}

\providecommand{\bysame}{\leavevmode\hbox to3em{\hrulefill}\thinspace}
\providecommand{\MR}{\relax\ifhmode\unskip\space\fi MR }
\providecommand{\MRhref}[2]{%
  \href{http://www.ams.org/mathscinet-getitem?mr=#1}{#2}
}
\providecommand{\href}[2]{#2}
\begin{thebibliography}{10}

\bibitem{ADNR2018}
Christophe Andrieu, Alain Durmus, Nikolas N{\"u}sken, and Julien Roussel,
  \emph{Hypocoercivity of piecewise deterministic {M}arkov process-{M}onte
  {C}arlo}, The Annals of Applied Probability \textbf{31} (2021), no.~5,
  2478--2517.

\bibitem{bakry2008}
Dominique Bakry, Franck Barthe, Patrick Cattiaux, and Arnaud Guillin, \emph{A
  simple proof of the {P}oincar\'{e} inequality for a large class of
  probability measures including the log-concave case}, Electron. Commun.
  Probab. \textbf{13} (2008), 60--66. \MR{2386063}

\bibitem{Bi2016}
Joris Bierkens, \emph{Non-reversible {M}etropolis-{H}astings}, Stat. Comput.
  \textbf{26} (2016), no.~6, 1213--1228. \MR{3538633}

\bibitem{BFR2019}
Joris Bierkens, Paul Fearnhead, and Gareth Roberts, \emph{The zig-zag process
  and super-efficient sampling for {B}ayesian analysis of big data}, Ann.
  Statist. \textbf{47} (2019), no.~3, 1288--1320. \MR{3911113}

\bibitem{BRZ2017}
Joris Bierkens, Gareth~O Roberts, and Pierre-Andr{\'e} Zitt, \emph{Ergodicity
  of the zigzag process}, The Annals of Applied Probability \textbf{29} (2019),
  no.~4, 2266--2301.

\bibitem{birrell2021quantification}
Jeremiah Birrell, Markos~A Katsoulakis, and Luc Rey-Bellet,
  \emph{Quantification of model uncertainty on path-space via goal-oriented
  relative entropy}, ESAIM: Mathematical Modelling and Numerical Analysis
  \textbf{55} (2021), no.~1, 131--169.

\bibitem{BRBMarkovUQ}
Jeremiah Birrell and Luc Rey-Bellet, \emph{Uncertainty quantification for
  {M}arkov processes via variational principles and functional inequalities},
  SIAM/ASA Journal on Uncertainty Quantification \textbf{8} (2020), no.~2,
  539--572.

\bibitem{BRSS2017}
Nawaf Bou-Rabee and Jes\'{u}s~Mar\'{\i}a Sanz-Serna, \emph{Randomized
  {H}amiltonian {M}onte {C}arlo}, Ann. Appl. Probab. \textbf{27} (2017), no.~4,
  2159--2194. \MR{3693523}

\bibitem{BVC2018}
Alexandre Bouchard-C\^{o}t\'{e}, Sebastian~J. Vollmer, and Arnaud Doucet,
  \emph{The bouncy particle sampler: a nonreversible rejection-free {M}arkov
  chain {M}onte {C}arlo method}, J. Amer. Statist. Assoc. \textbf{113} (2018),
  no.~522, 855--867. \MR{3832232}

\bibitem{Boucheron:2016}
St\'{e}phane Boucheron, G\'{a}bor Lugosi, and Pascal Massart,
  \emph{Concentration inequalities}, Oxford University Press, Oxford, 2013, A
  nonasymptotic theory of independence, With a foreword by Michel Ledoux.
  \MR{3185193}

\bibitem{cattiaux_guillin_2008}
Patrick Cattiaux and Arnaud Guillin, \emph{Deviation bounds for additive
  functionals of {M}arkov processes}, ESAIM Probab. Stat. \textbf{12} (2008),
  12--29. \MR{2367991}

\bibitem{chowdhary_dupuis_2013}
Kamaljit Chowdhary and Paul Dupuis, \emph{Distinguishing and integrating
  aleatoric and epistemic variation in uncertainty quantification}, ESAIM Math.
  Model. Numer. Anal. \textbf{47} (2013), no.~3, 635--662. \MR{3056403}

\bibitem{Desvillettes2001}
Laurent Desvillettes and C\'edric Villani, \emph{On the trend to global
  equilibrium in spatially inhomogeneous entropy-dissipating systems: the
  linear {F}okker-{P}lanck equation}, Comm. Pure Appl. Math. \textbf{54}
  (2001), no.~1, 1--42. \MR{1787105}

\bibitem{DHN}
Persi Diaconis, Susan Holmes, and Radford~M. Neal, \emph{Analysis of a
  nonreversible {M}arkov chain sampler}, Ann. Appl. Probab. \textbf{10} (2000),
  no.~3, 726--752. \MR{1789978}

\bibitem{Dolbeault2009}
Jean Dolbeault, Cl\'{e}ment Mouhot, and Christian Schmeiser,
  \emph{Hypocoercivity for kinetic equations with linear relaxation terms}, C.
  R. Math. Acad. Sci. Paris \textbf{347} (2009), no.~9-10, 511--516.
  \MR{2576899}

\bibitem{Dolbeault2015}
\bysame, \emph{Hypocoercivity for linear kinetic equations conserving mass},
  Trans. Amer. Math. Soc. \textbf{367} (2015), no.~6, 3807--3828. \MR{3324910}

\bibitem{Duane1987}
Simon Duane, A.D. Kennedy, Brian~J. Pendleton, and Duncan Roweth, \emph{{Hybrid
  Monte Carlo}}, Physics Letters B \textbf{195} (1987), no.~2, 216 -- 222.

\bibitem{DLP2016}
Andrew~B. Duncan, Tony Leli\`evre, and Grigorios~A. Pavliotis, \emph{Variance
  reduction using nonreversible {L}angevin samplers}, J. Stat. Phys.
  \textbf{163} (2016), no.~3, 457--491. \MR{3483241}

\bibitem{dupuis2011weak}
Paul Dupuis and Richard~S. Ellis, \emph{A weak convergence approach to the
  theory of large deviations}, Wiley Series in Probability and Statistics:
  Probability and Statistics, John Wiley \& Sons, Inc., New York, 1997, A
  Wiley-Interscience Publication. \MR{1431744}

\bibitem{DKPP}
Paul Dupuis, Markos~A. Katsoulakis, Yannis Pantazis, and Petr Plech\'{a}\v{c},
  \emph{Path-space information bounds for uncertainty quantification and
  sensitivity analysis of stochastic dynamics}, SIAM/ASA J. Uncertain. Quantif.
  \textbf{4} (2016), no.~1, 80--111. \MR{3455143}

\bibitem{DupuisSwapping1}
Paul Dupuis, Yufei Liu, Nuria Plattner, and J.~D. Doll, \emph{On the infinite
  swapping limit for parallel tempering}, Multiscale Model. Simul. \textbf{10}
  (2012), no.~3, 986--1022. \MR{3022029}

\bibitem{ELi2008}
Weinan E and Dong Li, \emph{The {A}ndersen thermostat in molecular dynamics},
  Comm. Pure Appl. Math. \textbf{61} (2008), no.~1, 96--136. \MR{2361305}

\bibitem{EckmannHairer2000}
Jean-Pierre Eckmann and Martin Hairer, \emph{Non-equilibrium statistical
  mechanics of strongly anharmonic chains of oscillators}, Comm. Math. Phys.
  \textbf{212} (2000), no.~1, 105--164. \MR{1764365}

\bibitem{EckmannHairer2003}
Jean-Pierre. Eckmann and Martin Hairer, \emph{Spectral properties of
  hypoelliptic operators}, Comm. Math. Phys. \textbf{235} (2003), no.~2,
  233--253. \MR{1969727}

\bibitem{EPR1998}
Jean-Pierre. Eckmann, Claude-Alain Pillet, and Luc Rey-Bellet,
  \emph{Non-equilibrium statistical mechanics of anharmonic chains coupled to
  two heat baths at different temperatures}, Comm. Math. Phys. \textbf{201}
  (1999), no.~3, 657--697. \MR{1685893}

\bibitem{FGM2012}
Joaquin Fontbona, H\'{e}l\`ene Gu\'{e}rin, and Florent Malrieu,
  \emph{Quantitative estimates for the long-time behavior of an ergodic variant
  of the telegraph process}, Adv. in Appl. Probab. \textbf{44} (2012), no.~4,
  977--994. \MR{3052846}

\bibitem{FGM2016}
\bysame, \emph{Long time behavior of telegraph processes under convex
  potentials}, Stochastic Process. Appl. \textbf{126} (2016), no.~10,
  3077--3101. \MR{3542627}

\bibitem{GaoGuillinWu}
Fuqing Gao, Arnaud Guillin, and Liming Wu, \emph{Bernstein-type concentration
  inequalities for symmetric {M}arkov processes}, Theory Probab. Appl.
  \textbf{58} (2014), no.~3, 358--382. \MR{3403002}

\bibitem{GKRW}
Konstantinos Gourgoulias, Markos~A Katsoulakis, Luc Rey-Bellet, and Jie Wang,
  \emph{How biased is your model? {C}oncentration inequalities, information and
  model bias}, IEEE Transactions on Information Theory \textbf{66} (2020),
  no.~5, 3079--3097.

\bibitem{Guillin2009}
Arnaud Guillin, Christian L\'{e}onard, Liming Wu, and Nian Yao,
  \emph{Transportation-information inequalities for {M}arkov processes},
  Probab. Theory Related Fields \textbf{144} (2009), no.~3-4, 669--695.
  \MR{2496446}

\bibitem{Herau2006}
Fr\'{e}d\'{e}ric H\'{e}rau, \emph{Hypocoercivity and exponential time decay for
  the linear inhomogeneous relaxation {B}oltzmann equation}, Asymptot. Anal.
  \textbf{46} (2006), no.~3-4, 349--359. \MR{2215889}

\bibitem{HerauNier2004}
Fr\'{e}d\'{e}ric H\'{e}rau and Francis Nier, \emph{Isotropic hypoellipticity
  and trend to equilibrium for the {F}okker-{P}lanck equation with a
  high-degree potential}, Arch. Ration. Mech. Anal. \textbf{171} (2004), no.~2,
  151--218. \MR{2034753}

\bibitem{HwHwSh2005}
Chii-Ruey Hwang, Shu-Yin Hwang-Ma, and Shuenn-Jyi Sheu, \emph{Accelerating
  diffusions}, Ann. Appl. Probab. \textbf{15} (2005), no.~2, 1433--1444.
  \MR{2134109}

\bibitem{Iacobucci2017}
Aleesandra Iacobucci, Stefano Olla, and Garbriel Stoltz, \emph{Convergence
  rates for nonequilibrium {L}angevin dynamics}, Ann. Math. Qu\'{e}.
  \textbf{43} (2019), no.~1, 73--98. \MR{3925138}

\bibitem{KRW}
Markos~A. Katsoulakis, Luc Rey-Bellet, and Jie Wang, \emph{Scalable information
  inequalities for uncertainty quantification}, J. Comput. Phys. \textbf{336}
  (2017), 513--545. \MR{3622628}

\bibitem{lezaud:hal-00940906}
Pascal Lezaud, \emph{Chernoff and {B}erry-{E}ss\'{e}en inequalities for
  {M}arkov processes}, ESAIM Probab. Statist. \textbf{5} (2001), 183--201.
  \MR{1875670}

\bibitem{Mattingly2002}
Jonathan Mattingly, Andrew Stuart, and Desmond Higham, \emph{Ergodicity for
  {SDE}s and approximations: locally {L}ipschitz vector fields and degenerate
  noise}, Stochastic Process. Appl. \textbf{101} (2002), no.~2, 185--232.
  \MR{1931266}

\bibitem{MonMarch2014}
Pierre Monmarch\'{e}, \emph{Hypocoercive relaxation to equilibrium for some
  kinetic models}, Kinet. Relat. Models \textbf{7} (2014), no.~2, 341--360.
  \MR{3195078}

\bibitem{Neal2011}
Radford~M. Neal, \emph{M{CMC} using {H}amiltonian dynamics}, Handbook of
  {M}arkov chain {M}onte {C}arlo, Chapman \& Hall/CRC Handb. Mod. Stat.
  Methods, CRC Press, Boca Raton, FL, 2011, pp.~113--162. \MR{2858447}

\bibitem{PetersdeWith2012}
Frank Peters and Gisjbertus de~With, \emph{Rejection-free {M}onte {C}arlo
  sampling for general potentials}, Phys. Rev. E \textbf{85} (2012), 026703.

\bibitem{StoltzTrs2016}
Stephane Redon, Gabriel Stoltz, and Zofia Trstanova, \emph{Error analysis of
  modified {L}angevin dynamics}, J. Stat. Phys. \textbf{164} (2016), no.~4,
  735--771. \MR{3529154}

\bibitem{RBS1}
Luc Rey-Bellet and Konstantinos Spiliopoulos, \emph{Irreversible {L}angevin
  samplers and variance reduction: a large deviations approach}, Nonlinearity
  \textbf{28} (2015), no.~7, 2081--2103. \MR{3366637}

\bibitem{RBS3}
\bysame, \emph{Improving the convergence of reversible samplers}, J. Stat.
  Phys. \textbf{164} (2016), no.~3, 472--494. \MR{3519206}

\bibitem{RBT2002}
Luc Rey-Bellet and Lawrence~E. Thomas, \emph{Exponential convergence to
  non-equilibrium stationary states in classical statistical mechanics}, Comm.
  Math. Phys. \textbf{225} (2002), no.~2, 305--329. \MR{1889227}

\bibitem{StoltzTrs2018}
Gabriel Stoltz and Zofia Trstanova, \emph{Langevin dynamics with general
  kinetic energies}, Multiscale Model. Simul. \textbf{16} (2018), no.~2,
  777--806. \MR{3799045}

\bibitem{StoltzVdE}
Gabriel Stoltz and Eric Vanden-Eijnden, \emph{Longtime convergence of the
  temperature-accelerated molecular dynamics method}, Nonlinearity \textbf{31}
  (2018), no.~8, 3748--3769. \MR{3826113}

\bibitem{villani2009hypocoercivity}
C\'{e}dric Villani, \emph{Hypocoercivity}, Mem. Amer. Math. Soc. \textbf{202}
  (2009), no.~950, iv+141. \MR{2562709}

\bibitem{WR2017}
Changye {Wu} and Christian~P. {Robert}, \emph{{Generalized Bouncy Particle
  Sampler}}, arXiv e-prints (2017), arXiv:1706.04781.

\bibitem{Wu}
Liming Wu, \emph{A deviation inequality for non-reversible {M}arkov processes},
  Ann. Inst. H. Poincar\'{e} Probab. Statist. \textbf{36} (2000), no.~4,
  435--445. \MR{1785390}

\bibitem{YosidaFA}
K\=osaku Yosida, \emph{Functional analysis}, Classics in Mathematics,
  Springer-Verlag, Berlin, 1995, Reprint of the sixth (1980) edition.
  \MR{1336382}

\end{thebibliography}
\end{document}